\title{Non-contractive logics, paradoxes, and multiplicative quantifiers}
\author[]{Carlo Nicolai}
\address{Department of Philosophy, King’s College, London, United Kingdom}
\email[]{carlo.nicolai@kcl.ac.uk}
\thanks{}
\author[]{Mario Piazza}
\address{Scuola Normale Superiore di Pisa, Classe di Lettere e Filosofia, Pisa, Italy}
\email[]{mario.piazza@sns.it}
\thanks{}
\author[]{Matteo Tesi$^1$}
\address{Scuola Normale Superiore di Pisa, Classe di Lettere e Filosofia, Pisa, Italy}
\email[]{matteo.tesi@sns.it}
\thanks{$^1$Corresponding author}
\date{September 2022}
\begin{document}
\maketitle
\begin{abstract}  The paper investigates from a proof-theoretic perspective various non-contractive logical systems circumventing logical and semantic paradoxes. {Until recently, such systems only displayed additive quantifiers  (Gri\v{s}in, Cantini). Systems with multiplicative quantifers have also been proposed in the 2010s (Zardini), but they turned out to be inconsistent with the naive rules for truth or comprehension.}
We start by presenting a first-order system for disquotational truth with additive quantifiers and we compare it with Gri\v{s}in set theory. 
We then analyze {the reasons behind the inconsistency phenomenon affecting multiplicative quantifers}: after interpreting the exponentials in affine logic as vacuous quantifiers, we show how such a logic can be simulated within a  truth-free
fragment of a system with multiplicative quantifiers. Finally, we prove  that the logic of these multiplicative quantifiers {(but without disquotational truth)} is consistent, by showing that an infinitary version of the cut rule can be eliminated. This paves the way to a syntactic approach to the proof theory of infinitary logic with infinite sequents.

\end{abstract}
\section{Introduction}




Since \cite{fitch1936} it is well-known that the contraction rule plays an essential role in the derivation of logical and semantic paradoxes such as the Liar, Russell's and Curry's.  In the last decades there has been a renewed interest in non-contractive logical systems  -- as Fitch called them --  that block these  paradoxes by dropping contraction from their sequent calculus formulation. 
\cite{gri82} established the consistency of a ``set theory'' -- or better, a property theory -- based on what is nowadays called affine logic (i.e. linear logic equipped with the weakening rule) extended with na\"ive comprehension. \cite{petersen2000} further elaborates on Gri\v{s}in's proposal by giving a proof-theoretic analysis of a system with unrestricted abstraction and some additional axioms. Moreover, \cite{can03} embeds combinatory logic in Gri\v{s}in set theory, thereby establishing its undecidability. As we shall see below, it's not difficult to see that Gri\v{s}in's set theory gives rise to a consistent theory of disquotational truth. 

Nevertheless, it also clear that the solution thus provided cannot be the whole story since it only features {\em additive} quantifiers, which are in effect {\em classical} quantifiers in disguise. Indeed, as stressed in \cite{mon04} and \cite{paoli2005}, the difference between the additive universal quantifier and the multiplicative one may be roughly understood as the one between {\it any} and {\it every}. 
Given the splitting phenomenon determined by the absence of contraction, the additive quantifiers generalize additive connectives, but there is no logical corresponding device generalizing multiplicative ones \cite{bla92,mon04,mapa14}. A spontaneous  way of conceiving of multiplicative quantifiers is to identify the universal and the existential quantifiers with {\em infinitary} multiplicative conjunctions and disjunctions, respectively

\begin{center}

$\forall xA \equiv A(x/t_1) \otimes A(x/t_2) \otimes \dots$\\
$\exists xA \equiv A(x/t1) \parr A(x/t_2) \parr \dots$
\end{center}

Following this intuition, \cite{zar11} presented a theory of disquotational truth based on a purely multiplicative fragment of affine logic featuring infinitary quantifiers. 
However, such a theory has received enough attention to make it clear that: (i) it cannot be extended with suitable primitive recursive functions \cite{daro18}; 
 (ii) the attempted proof of consistency of the system via cut-elimination  contains a gap \cite{fje20}; (iii) the system is outright inconsistent given some plausible principles for vacuous quantification \cite{fjol21}.

 In the paper we  contribute to the understanding of the non-contractive landscape, by clarifying a cluster of intertwined issues. In particular:
 \smallskip
 
 \begin{itemize}
    \item We simplify the cut-elimination proof for Gri\v{s}in's set theory carried in \cite{can03}, while fixing a problem in Cantini's strategy. We also show that the (apparently weaker) theory of disquotational truth based on affine logic supports Cantini's derivation of L\"ob's principle given a $\mathtt{K4}$ modality. 
    \smallskip
    \item We show that the very rules for vacuous quantification that are responsible for the inconsistency of Zardini's system can be employed to recover full classical logic in the context of affine logic. In fact, we show that there exists an \emph{exact} translation of (predicate, infinitary) classical logic into affine logic with vacuous quantification. 
        \smallskip
    \item In the field of linear logic \cite{Girard87}, the  dismissed contraction and weakening can be recovered and thus controlled using exponentials: !, ?, which in essence behave as $\mathtt{S4}$ modalities. We provide a new perspective on exponentials by interpreting them as vacuous quantifiers. In particular, we show how to simulate affine linear logic within a proper fragment of the system of multiplicative quantifiers, by giving a sound and faithful translation.
        \smallskip
        \item We directly show that Zardini’s cut-elimination algorithm
         is based on a proof-manipulation that does not preserve provability. 
            \smallskip
    \item Finally, we show 
    that an infinitary version of the cut rule can be eliminated from the purely logical system featuring infinitary quantifiers. 
 \end{itemize}
 \smallskip
The last point also answers to a question recently posed by \cite{petersen2022}. However, such result has a proof-theoretic interest {\em per se}, beyond a non-contractive approach to paradoxes. The proof theory of well-founded infinitely branching derivations has been extensively studied and has found large application in the context of ordinal analysis. Well-founded infinitary derivations involving sequents with infinitely many formulas have received less attention. The investigations concerning this kind of calculi have been conducted using semantical methods (see \cite{takProof}). 

A semantic argument can be employed to show the cut-free completeness of a calculus for infinitary classical logic with infinite sequents. However, this strategy works only insofar as a semantic presentation is available and a syntactic procedure for cut-elimination is currently lacking. We provide a syntactic proof of cut-elimination for the system involving sequents with infinitely many formulas for the logic of multiplicative quantifiers. 

The plan for the paper is as follows. Section 2 discusses a contraction-free and cut-free system for disquotational truth in relation to Grishin set theory. Section 3 shows how the exponentials ! and ? can be demodalized by conceiving them in terms of vacuous quantifiers within (a truth-free fragment of) Zardini’s system. Section 4 splits into a {\em pars destruens} -- which investigates the reasons leading to the inconsistency of Zardini’s system -- and a {\em pars construens} which yields a cut-elimination procedure for multiplicative quantifiers. Section 5 concludes with sketching some open problems triggered by our results.
\section{Contraction and the paradoxes}

Non-contractive approaches to the logical and semantic paradoxes are known to be formally successful. Without contraction, it's possible to extend a standard cut-elimination procedure for first-order affine logic without exponentials (henceforth, affine logic $\al$) to its extension with na\"ive rules for truth, (class-)membership, predication. 
\begin{dfn}[Affine Logic $\al$] $\Gamma,\Delta,\Theta,\Lambda...$ range over finite multisets of formulae of a countable, first-order Tait language $\mc{L}$.\footnote{For the definition of the Tait language, see \cite{sch77}.}
    \begin{align*}
         &\ax{}\Rlb{{\sc in}}
         \uinf{\vdash\Gamma,P,\ovl{P}}
            \DisplayProof   \\[10pt]
            &\ax{\vdash\Gamma,A_i }\Rlb{$A_i$,\text{\scriptsize{$i=1,2$}}}
                \uinf{\vdash\Gamma,A_1\oplus A_2}
                    \disp
                &&\ax{\vdash\Gamma,A}
                    \ax{\vdash\Gamma,B}\Rlb{$\with$}
                        \binf{\vdash\Gamma,A\with B}
                            \disp\\[10pt]
            &\ax{\vdash\Gamma,A,B}\Rlb{$\parr$}
                    \uinf{\vdash\Gamma,A\parr B}
                        \disp
                &&\ax{\vdash\Gamma,A}
                    \ax{\vdash\Delta,B}\Rlb{$\otimes$}
                        \binf{\vdash\Gamma,\Delta,A\otimes B}
                            \disp\\[10pt]
            &\ax{\vdash\Gamma,A(y/x)}\Rlb{$\forall$}
                \uinf{\vdash\Gamma,\forall x A}
                    \disp
                &&\ax{\vdash\Gamma,A(t/x)}\Rlb{$\exists$}
                    \uinf{\vdash\Gamma,\exists x A}
                        \disp
    \end{align*}
\end{dfn}
\noindent \emph{Linear logic} without exponentials is obtained from affine logic by focusing on initial sequents of form $\vdash P,\ovl{P}$.

Let $\mc{L}^+$ be a language featuring:
\begin{itemize}
    \item For $n,m\in \nat$, $n$-ary predicates $\sat{n}{m}$ and their dual;
    \item The logical symbols of $\al$;
    \item The $\lambda$ term forming operator $\lambda \cdot.\cdot$;
    \item Variables $v_1,v_2,\ldots$ (we employ $x,y,z$ for metavariables). 
\end{itemize}
For formulae $A\in \mc{L}^+$, $\lambda x A$ is a term whose free variables are the free variables of $A$ minus $x$. We abbreviate:
\[
    \lambda x_1\ldots x_n\,A:= \lambda x_1.(\ldots \lambda x_n \, A \ldots).
\]
Notice that we allow for ``self-referential'' names to be built in the system. For instance, we allow for the existence of terms $l$  such that
\[
    l:= \corn{\ovl{\sat{1}{0}}(l)}. 
\]
The term $l$, as we shall see shortly, plays the role of a name for a Liar sentence. Similar terms are available for other paradoxical sentences such as Russell's, Curry's, and so on. 

\begin{dfn}[Semantic Extensions of $\al$]\hfill\label{dfn:uts}
\begin{enumeratei}
    \item The system $\uts{n}{m}$ is obtained by formulating $\al$ in $\mc{L}^+$ and by adding the rules
    \begin{align*}
        & 
        \ax{\vdash\Gamma, A(t_1,\ldots,t_n)}\Rlb{$\sat{n}{m}$}
             \uinf{\vdash\Gamma, \sat{n}{m}(\lambda x_1\ldots x_n\,A,t_1\ldots t_n)}
                \disp\\[1ex]
        &
        \ax{\vdash\Gamma, \ovl{A}(t_1,\ldots,t_n)}\Rlb{$\ovl{\sat{n}{m}}$}
             \uinf{\vdash\Gamma, \ovl{\sat{n}{m}}(\lambda x_1\ldots x_n\,A,t_1\ldots t_n)}
                \disp
    \end{align*}
    for all formulae $A$ with exactly $m$ free variables. 
    \item $\mathtt{UTS}$ comprises rules for $\sat{n}{m}$ for all $n,m\in \nat$. 
\end{enumeratei}
\end{dfn}
\begin{remark}
    The template provided by the theories $\uts{n}{m}$ enable us to define several systems that are relevant for the analysis of the paradoxes in a non-contractive setting. As we shall see shortly, the systems $\uts{1}{m}$, for each $m\in \nat$, correspond to Gri\^{s}in set theory (which, being non-extensional, is perhaps better categorized as a property theory). A non-contractive theory of \emph{disquotational truth} corresponds to $\uts{1}{0}$. 
\end{remark}

Derivations in $\al$ and extensions thereof are finite trees that are locally correct with respect to the rules just given. Cantini in \cite{can03} provides a  cut-elimination strategy for the system $\uts{1}{m}$. The strategy relies on a triple induction on, respectively, the number of na\"ive comprehension rules, the grade of the cut-formula, and the level of the cut. The strategy, as it stands, cannot deal satisfactorily with some of the cases, for instance the one in which the last inference in one of the branches before a cut is an additive conjunction and in which the cut formula is not principal in the last inference.\footnote{The triple induction may be repairable -- as suggested by Cantini in personal communication -- by redefining what Cantini calls $\in$-complexity for additive rules, by taking in particular the maximum of the $\in$-complexity of the premisses instead of their sum.  } We circumvent the problem by showing that an induction on a \emph{single parameter} suffices. In order to do this, we provide a slightly nonstandard measure of length of the derivation. 
\begin{dfn}\label{dfn:height}
    Given a proof $\pi$, its height $h(\pi)$ is given by the following recursion:
    \begin{itemize}\setlength\itemsep{1ex}
        \item $h(\pi)=1$ for $\pi$ an instance of ({\sc in});
        \item $h(\pi)=\mathrm{max}(h(\pi_0),h(\pi_1))+1$, with $\pi$ ending with an application of $(\with)$ to $\pi_0$ and $\pi_1$; 
        \item $h(\pi)=h(\pi_0)+h(\pi_1)$, with $\pi$ ending with an application of $(\otimes)$ to $\pi_0$ and $\pi_1$; 
        \item $h(\pi)=h(\pi_0)+1$ in all other cases. 
    \end{itemize}
\end{dfn}

\begin{prop}
   Cut is admissible in $\mathtt{UTS}$. Therefore, $\mathtt{UTS}$ is consistent. 
\end{prop}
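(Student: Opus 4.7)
The plan is to establish cut admissibility by a lexicographic induction on the pair $(|A|,\, h(\pi)+h(\sigma))$, where $A$ is the cut formula, $|A|$ its grade (with $|\sat{n}{m}(\lambda\vec x.B,\vec t)|:=|B|+1$), and $\pi,\sigma$ are the two premise derivations of the cut. The nonstandard height of Definition~\ref{dfn:height} is designed precisely so that the secondary induction on $h(\pi)+h(\sigma)$, at a fixed grade, strictly decreases under every non-principal permutation, thereby replacing Cantini's triple induction with a single one.

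Case analysis proceeds on the last rules applied in $\pi$ and $\sigma$. When the cut formula is not principal in $\pi$ (symmetrically in $\sigma$), we permute the cut upward and reapply the last rule. Three subcases arise: (i) for single-premise rules, $h(\pi)=h(\pi_{0})+1$, and the new cut has height $h(\pi_{0})+h(\sigma)<h(\pi)+h(\sigma)$; (ii) for $(\with)$, the cut is duplicated over the premises $\pi_{0},\pi_{1}$, each new cut of height $h(\pi_{i})+h(\sigma)<\max(h(\pi_{0}),h(\pi_{1}))+1+h(\sigma)=h(\pi)+h(\sigma)$; (iii) for $(\otimes)$, the cut formula lies in exactly one premise, say $\pi_{0}$, and the new cut has height $h(\pi_{0})+h(\sigma)<h(\pi_{0})+h(\pi_{1})+h(\sigma)=h(\pi)+h(\sigma)$, using $h(\pi_{1})\geq 1$. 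It is precisely this last inequality that would collapse under a max-based height and force the auxiliary induction parameter; the multiplicative height for $(\otimes)$ is what buys us the single induction.

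When the cut formula is principal in both premises, we reduce to cuts on subformulas of strictly smaller grade, in the standard way: $A\otimes B$ against $\ovl A\parr \ovl B$ yields two successive cuts on $A$ and on $B$; $A\with B$ against $\ovl A\oplus \ovl B$ selects the matching summand; $\forall xA$ against $\exists x\ovl A$ instantiates the eigenvariable by the witness term and produces a cut on $A(t/x)$; and the critical case, $\sat{n}{m}(\lambda\vec x.B,\vec t)$ against $\ovl{\sat{n}{m}}(\lambda\vec x.B,\vec t)$, reduces to a cut on $B(\vec t)$, of grade $|B|<|B|+1$. In every such reduction the outer induction on grade applies, and the inner induction on height becomes irrelevant.

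The main obstacle is the $(\otimes)$ non-principal permutation, which is exactly where Cantini's triple induction breaks down: what rescues a single-parameter argument is the decision to define $h$ multiplicatively on $(\otimes)$ and additively (max $+1$) on $(\with)$, mirroring the structural distinction between sharing and splitting of contexts. Once cut admissibility is proved, consistency is immediate: a derivation of the empty sequent would reduce to a cut-free one, but every initial sequent is of the form $\vdash\Gamma,P,\ovl P$ and every logical or naive rule introduces at least one formula in its conclusion, so no cut-free derivation of $\vdash$ exists.
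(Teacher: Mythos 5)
Your secondary induction---the nonstandard height that is additive at $(\otimes)$ and $\max{}+1$ at $(\with)$, together with the case analysis on non-principal permutations---is exactly the engine of the paper's proof. But your primary induction parameter is ill-founded, and this is a genuine gap. You set $|\sat{n}{m}(\lambda\vec x\,B,\vec t)|:=|B|+1$, yet the system explicitly admits self-referential $\lambda$-terms such as $l:=\corn{\ovl{\sat{1}{0}}(l)}$; for the Liar formula $\ovl{\sat{1}{0}}(l)$ your recursion demands $|A|=|A|+1$, so the grade is undefined on precisely the formulas the construction is meant to tame. This is the familiar point that disquotation collapses syntactic complexity: the premise $A(t_1,\ldots,t_n)$ of a $\sat{n}{m}$-inference can be arbitrarily more complex than its conclusion, so no grade-first induction survives the na\"ive rules (the paper makes the same point in Section 4 to explain why its infinitary cut-elimination, which does use a degree, cannot absorb a truth predicate).

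The repair is that the grade component is not only broken but unnecessary. The paper proves a reduction lemma by induction on the \emph{single} parameter $h(\mc{D}_0)+h(\mc{D}_1)$, strengthened to assert that the resulting cut-free proof $\mc{D}$ satisfies $h(\mc{D})\leq h(\mc{D}_0)+h(\mc{D}_1)$. The principal $\sat{n}{m}$-versus-$\ovl{\sat{n}{m}}$ case, which you delegate to the grade, is handled by height alone: both rules are unary, so the premises have strictly smaller height-sum even though the ``subformula'' may be more complex. The case that genuinely needs the strengthened statement is principal $\otimes$ against $\parr$, where two successive cuts are performed and the second appeal to the induction hypothesis is licensed only because the first returns a derivation of height at most $h(\mc{D}_{00})+h(\mc{D}_{10})$, keeping the relevant sum below $h(\mc{D}_{00})+h(\mc{D}_{01})+h(\mc{D}_{10})+1$. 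It is here---not in the non-principal $(\otimes)$ permutation, which would survive a max-based height as well---that the additive clause for $(\otimes)$ earns its keep. Your derivation of consistency from cut admissibility is fine.
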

\begin{proof}
    The proof rests on the following reduction lemma:
    \begin{itemize}\label{lem:red}
        \item[\textsc{(r)}] {if $\mc{D}_0$ and $\mc{D}_0$ are cut-free proofs of $\Gamma,A$ and $\Delta,\ovl{A}$, respectively, then there is a cut-free proof $\mc{D}$ of $\Gamma,\Delta$ with $h(\mc{D})\leq h(\mc{D}_0)+h(\mc{D}_1)$.  }
    \end{itemize}
    \textsc{(r)} is proved by an induction on $h(\mc{D}_0)+h(\mc{D}_1)$. We consider two cases for illustration. If the ``cut formulae'' are principal in the last inference, and they are obtained by  (for notational simplicity) $\sat{n}{n}$ and $\ovl{\sat{n}{n}}$, respectively, then we have 
    \begin{align*}
        & \ax{\mc{D}_{00}}\noLine
            \uinf{\vdash\Gamma,A(x_1\ldots x_n)}
                \uinf{\vdash\Gamma,\sat{n}{n}(\lambda \vec{x} A,\vec{x})}
                \disp
         & \ax{\mc{D}_{10}}\noLine
            \uinf{\vdash\Delta,\ovl{A}(x_1\ldots x_n)}
                \uinf{\vdash\Delta,\ovl{\sat{n}{n}}(\lambda \vec{x} A,\vec{x})}
                \disp
    \end{align*}
    We can then simply apply the induction hypothesis to $\mc{D}_{00}$ and $\mc{D}_{10}$. If the last rules applied are $(\otimes)$ and $(\parr)$, respectively, we have:
    \begin{align*}
        &\ax{\mc{D}_{00}}\noLine
            \uinf{\vdash\Gamma,A}
        \ax{\mc{D}_{01}}\noLine
            \uinf{\vdash\Delta,B}
                \binf{\vdash\Gamma,\Delta, A\otimes B}
            \disp
        &&
        \ax{\mc{D}_{10}}\noLine
            \uinf{\vdash\Theta,\ovl{A},\ovl{B}}
                \uinf{\vdash\Theta,\ovl{A}\parr\ovl{B}}
                \disp
    \end{align*}
    Then the desired $\mc{D}$ is obtained by applying the induction hypothesis to, e.g., $\mc{D}_{00}$ and $\mc{D}_{10}$, and then to the resulting derivation and $\mc{D}_{01}$. It's here that the definition of $h(\cdot)$ plays a role: if length was defined as the number of nodes in the maximal branch of the proof-tree, then the induction would not go through in this case as, potentially,  $h(\mc{D})> h(\mc{D}_0)+h(\mc{D}_1)$. 
\end{proof}

$\al$ is known to be decidable. A natural question is whether the addition of rules for semantic notions preserves decidability. The answer may depend, of course, on which rules are added. 
\begin{obse}
   The system featuring the rules $\uts{1}{m}$ for all $m$ is undecidable. Therefore, so is $\mathtt{UTS}$. 
\end{obse}
\begin{proof}
  The claim can be established by interpreting Gri\^sin class theory in $\uts{1}{m}$. The translation scheme is quite straightforward, since the parameters in the rules of $\uts{1}{m}$ yield a notational variant of set-membership. The translation $\tau \colon \mc{L}_\in \to \lt$ leaves literals unchanged, commutes with the propositional connectives and quantifiers, and is such that, for $A$ a formula with $m$ free variables,
    \[
        (t\in \lambda x\,A)^\tau := \sat{1}{m}(\lambda x\,A, t).
    \]
   In fact,  by reversing the translation it's easy to see that $\uts{1}{m}$ and Gri\v{s}in's set theory are definitionally equivalent or synonymous in the sense of \cite{vis06}. 
\end{proof} 
\begin{op}
    Is $\uts{1}{0}$ decidable?
\end{op}

Cantini shows that the addition of a $\mathtt{K4}$ modality to Gri\v{s}in set theory -- that is, a rule corresponding to the modal principle $4$ -- and a necessitation rule is strong enough to derive the L\"ob's principle $\Box (\Box A \ra A) \ra \Box A$. We strengthen Cantini's observation and show that the schema $\uts{1}{0}$ suffices for the task. In what follows, it will be convenient to refer to the canonical name $\corn{A}$ of a sentence $A$ of $\mc{L}$, and to the corresponding truth-ascription $\T\corn{A}$. We let, for $A$ a sentence:
    \begin{align*}
        & \corn{A}:= \lambda v_0\,A, && \T\corn{A}:\lra \sat{1}{0}(\corn{A},v_0). 
    \end{align*}
\begin{dfn}
    The system $\uts{1}{0}+\mathtt{K4}$ is obtained by extending $\uts{1}{0}$ with the rules:
    \begin{align*}
        &\ax{\vdash\Diamond \Gamma,\Gamma, A}\Rlb{{\sc nec}}
            \uinf{\vdash\Delta,\Diamond \Gamma,\Box A}
                \disp
            &&\ax{\vdash\Diamond \Gamma, \Delta,A}
                \ax{\vdash\Diamond \Gamma,\Theta, B}\Rlb{$\Box \otimes$}
                    \binf{\vdash\Diamond\Gamma,\Delta,\Theta,A \otimes B}
                    \disp
                    &&&\ax{\vdash\Gamma,A}
                        \ax{\vdash\Delta, \ovl{A}}
                        \Rlb{\scriptsize Cut}
                        \binf{\vdash\Gamma,\Delta}
                        \disp
    \end{align*}
\end{dfn}
\begin{lemma}
   $ \uts{1}{0}+\mathtt{K4}$ derives the schema $\Box (\Box A\ra A)\ra \Box A$.
\end{lemma}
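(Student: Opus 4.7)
I adapt the standard $\mathtt{K4}$-derivation of L\"ob's theorem to the Tait calculus of $\uts{1}{0}+\mathtt{K4}$. Writing $B \to C$ for the multiplicative implication $\ovl{B} \parr C$, the target sequent is $\vdash \Diamond(\Box A \otimes \ovl{A}),\, \Box A$.

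I first use the self-referential term-forming apparatus of the language $\mc{L}^+$ to manufacture a L\"ob sentence. Let $l$ be a term satisfying $l = \lambda v_0.(\Box\,\sat{1}{0}(l,v_0) \to A)$ and set $L := \sat{1}{0}(l,v_0)$. Since $\sat{1}{0}$ and $\ovl{\sat{1}{0}}$ are (dual) atomic predicates, $\vdash \ovl{L},\, L$ is an instance of ({\sc in}); and the truth-rules supply the two unfoldings needed below: from $\vdash \Gamma,\,\Box L \to A$ one derives $\vdash \Gamma,\, L$, and from $\vdash \Gamma,\,\Box L \otimes \ovl{A}$ one derives $\vdash \Gamma,\, \ovl{L}$.

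Next I derive $\vdash \Diamond\ovl{L},\,\Box A$ -- the sequent form of $\Box L \to \Box A$ -- without invoking the L\"ob hypothesis. Starting from the modal identity $\vdash \Box L,\, \Diamond\ovl{L}$, which I obtain by applying ({\sc nec}) with $\Gamma=\{\ovl{L}\}$ to $\vdash \Diamond\ovl{L},\, \ovl{L},\, L$ (itself a weakening of $\vdash \ovl{L},\, L$), and from $\vdash \ovl{A},\, A$, an application of $(\otimes)$ yields $\vdash \Diamond\ovl{L},\, A,\, \Box L \otimes \ovl{A}$; the $\ovl{\sat{1}{0}}$ rule then folds $\Box L \otimes \ovl{A}$ into $\ovl{L}$, giving $\vdash \Diamond\ovl{L},\, \ovl{L},\, A$. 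A further application of ({\sc nec}) with $\Gamma = \{\ovl{L}\}$ and principal formula $A$ produces $\vdash \Diamond\ovl{L},\, \Box A$. Here the duplication of $\Gamma$ in the premise of ({\sc nec}) is exactly what encodes the modal principle $4$.

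I then derive $\vdash \Diamond(\Box A \otimes \ovl{A}),\,\Box L$, this time with the L\"ob context in play. From the sequent of the previous paragraph and $\vdash \ovl{A},\, A$, a $(\otimes)$-step plus weakening gives $\vdash \Diamond(\Box A \otimes \ovl{A}),\, \Box A \otimes \ovl{A},\, \Diamond\ovl{L},\, A$; then $(\parr)$ together with the $\sat{1}{0}$ rule condenses $\Diamond\ovl{L} \parr A = \Box L \to A$ into $L$, producing $\vdash \Diamond(\Box A \otimes \ovl{A}),\, \Box A \otimes \ovl{A},\, L$. A second application of ({\sc nec}) with $\Gamma = \{\Box A \otimes \ovl{A}\}$ yields $\vdash \Diamond(\Box A \otimes \ovl{A}),\,\Box L$. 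A single Cut on $\Box L$ against the sequent of the previous paragraph then delivers the target $\vdash \Diamond(\Box A \otimes \ovl{A}),\, \Box A$, which is $\Box(\Box A \to A) \to \Box A$. The main obstacle is purely logistical: each use of ({\sc nec}) demands its premise to display the context simultaneously in $\Diamond$-ed and in bare form, and because $L$ carries its own $\Box L$ internally, one must line up the occurrences of $\ovl{L}$ and $\Diamond\ovl{L}$ carefully after each $(\otimes)$-step so that the fixed-point rule can be applied at precisely the right moment.
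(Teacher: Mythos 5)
Your derivation is correct, and it actually carries the argument further inside the calculus than the paper does. Both proofs run on the same engine: a self-referential L\"ob sentence built with the $\lambda$-term apparatus (your $L$ with $l=\corn{\Box L\ra A}$ versus the paper's $C:\leftrightarrow(\Box\T\corn{C}\ra A)$), the (\textsc{nec}) rule whose $\Diamond\Gamma,\Gamma$ premise encodes the $4$ principle, the $\sat{1}{0}$/$\ovl{\sat{1}{0}}$ rules to fold and unfold the fixed point, and cuts on $\Box L$. The difference is in the decomposition. The paper derives only the L\"ob \emph{rule} --- if $\vdash\Diamond\ovl{A},A$ is derivable then so is $\vdash A$ --- and then delegates the passage from rule to schema to ``modal reasoning in G\"odel--L\"ob's provability logic'' following Cantini. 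You derive the schema directly as the sequent $\vdash\Diamond(\Box A\otimes\ovl{A}),\Box A$, split into the hypothesis-free half $\vdash\Diamond\ovl{L},\Box A$ and the half $\vdash\Diamond(\Box A\otimes\ovl{A}),\Box L$, glued by a single cut on $\Box L$; each intermediate sequent checks out against the rules of $\uts{1}{0}+\mathtt{K4}$, and the conclusion is exactly the sequent the paper later invokes (for $A=P$) when discussing the failure of cut-elimination, so your version makes that remark self-contained. A further small gain: by taking $L$ to be the atomic truth-ascription $\sat{1}{0}(l,v_0)$ rather than the compound sentence itself, your identity $\vdash\ovl{L},L$ is literally an instance of (\textsc{in}), whereas the paper uses $\vdash\ovl{C},C$ for a compound $C$ as a leaf without comment. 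What the paper's route buys in exchange is brevity and an explicit display of the rule form, which is what the subsequent conservativity remark actually needs; yours buys a fully internal proof of the stated schema at the cost of the bookkeeping of $\Diamond$-ed versus bare context copies, which you handle correctly.
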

\begin{proof}
Let $C:\leftrightarrow (\Box\T\corn{C}\to A)$, for arbitrary $A$. We show that if $\Diamond \ovl{A} ,A$ is derivable, then so is $A$. We proceed as follows:
\smallskip
\begin{align*}
\scalebox{0.85}{
\ax{\vdash  \Diamond\ovl{\T}\corn{C},\Box\T\corn{C}}
\ax{\vdash \ovl{A},A}
\Rlb{$\otimes$}
\binf{ \vdash\Diamond\ovl{\T}\corn{C},\Box\T\corn{C}\otimes \ovl{A}, A}
\Rlb{$\ovl{\T}$}
\uinf{\vdash\Diamond\ovl{\T}\corn{C},\ovl{\T}\corn{C}, A }
\Rlb{$\Box$}
\uinf{\vdash \Diamond\ovl{\T}\corn{C}, \Box A }
\ax{\vdash \Diamond\ovl{A},A  }
\Rlb{\scriptsize Cut}
\binf{\vdash \Diamond\ovl{\T}\corn{C}, A}
\Rlb{$\parr$}
\uinf{\vdash \Diamond\ovl{\T}\corn{C}\parr A}
\ax{\vdash \Box\T\corn{C}\otimes \ovl{A}, C}
\Rlb{\scriptsize Cut}
\binf{\vdash C}
\Rlb{$\T$}
\uinf{\vdash\T\corn{C}}
\Rlb{$\Box$}
\uinf{\vdash \Box\T\corn{C}}
\ax{\vdash \Diamond\ovl{\T}\corn{C}, A}
\Rlb{\scriptsize Cut}
\binf{\vdash A}
\disp}
\end{align*}
From this point on, modal reasoning in G\"{o}del-L\"{o}b's provability logic as reported also in \cite[Thm 2.8]{can03} suffices.
\end{proof} 
By translating the box modality as $P\parr \ovl{P}$ for a designated atom $P$, we immediately obtain the conservativity of $ \uts{1}{0}+\mathtt{K4}$ over $\uts{1}{0}$ which in turns immediately yields the consistency of the former system.

We would like to conclude this section by observing that the calculus $\uts{1}{0}+\mathtt{K4}$ provably does not admit cut-elimination. To witness this it is enough to consider the sequent $\vdash\Diamond (\Box P\otimes \ovl{P}), \Box P$. The latter is indeed provable via cut as shown by the above derivation, but does not admit a cut-free proof by inspection of the rules.

\begin{op}
Can we obtain a cut-free system equivalent to $\uts{1}{0}+\mathtt{K4}$? A natural approach would be to substitute the modal rule with:
\begin{align*}
\ax{\vdash\Diamond\Gamma,\Gamma, \Diamond\ovl{A},A}
\uinf{\vdash\Diamond\Gamma,\Box A}
\disp
\end{align*}
\end{op}

The systems considered so far feature only additive quantifiers, which can be viewed as straightforward generalizations of the additive conjunction and disjunction. However, this straightforward solution to the logical paradoxes may not be completely satisfactory: the system lacks quantifiers that generalize multiplicative connectives. Several logicians and philosophers encouraged such a strengthening of the basic non-contractive theory \cite{bla92,mon04,mapa14}. The challenge was taken up by Zardini in \cite{zar11}.

\section{Multiplicative quantifiers and inconsistency}

\cite{zar11} attempts to establish a cut elimination theorem for an extension for the multiplicative fragment of affine logic extended with a combination of multiplicative quantifiers and na\"ive truth ($\ikto$). By $\biguplus_{i\in I}\Gamma_i$ we denote the infinitary multiset union of the $\Gamma_i$. Terms $t_1,t_2,t_3,...$ constitutes an exhaustive enumeration of the terms of the language.

\begin{align*}
         &\ax{}\Rlb{{\sc in}}
         \uinf{\vdash\Gamma,P,\ovl{P}}
            \DisplayProof   \\[10pt]
            &\ax{\vdash\Gamma,A }\Rlb{\T}
                \uinf{\vdash\Gamma,\T(\corn{A})}
                    \disp
                &&\ax{\vdash\Gamma,\ovl{A}}
                    \Rlb{$\ovl{\T}$}
                        \uinf{\vdash\Gamma,\ovl{\T}(\corn{A})}
                            \disp\\[10pt]
            &\ax{\vdash\Gamma,A,B}\Rlb{$\parr$}
                    \uinf{\vdash\Gamma,A\parr B}
                        \disp
                &&\ax{\vdash\Gamma,A}
                    \ax{\vdash\Delta,B}\Rlb{$\otimes$}
                        \binf{\vdash\Gamma,\Delta,A\otimes B}
                            \disp\\[10pt]
            & \ax{\hdots}
            \ax{\vdash\Gamma_i,A(t_i/x)}
            \ax{\hdots}\Rlb{$\forall$}
                \trinf{\vdash\biguplus_{i\in I}\Gamma_i,\forall x A}
                    \disp
                &&\ax{\vdash\Gamma,A(t_1/x),A(t_2/x),\hdots}\Rlb{$\exists$}
                    \uinf{\vdash\Gamma,\exists x A}
                        \disp
    \end{align*}

\smallskip

Zardini motivates the theory by emphasizing that additive connectives are not compatible with the solutions to the semantic paradoxes he defends; as a consequence, multiplicative quantifiers become the natural extension of multiplicative conjunction and disjunction. The proposal consists in equating multiplicative universal and existential quantifiers with an infinitary multiplicative conjunction and disjunction, respectively. This move is not without consequences from the point of view of the structural analysis of the system. In particular, the choice of such a reading of quantifiers has the immediate consequence of working with sequents with infinite multisets of formulas.


Several problems have been found with Zardini's proposal, but his work contains insightful ideas that prompted interest in the study of infinitary systems with multiplicative quantifiers and their interaction with paradox-breeding notions.  \cite{daro18} show that the extension of Zardini's system with basic arithmetical axioms lead to inconsistency. Moreover, \cite{fje20} isolates a gap in the cut-elimination proof. In \S\ref{sec:zrce}, we directly show that Zardini's cut-elimination algorithm is based on a proof-manipulation that does not preserve provability. In a recent paper \cite{fjol21}, it is shown that the system $\ikto$ is outright inconsistent, if the rules for the multiplicative quantifiers are used in a natural way to deal with vacuous quantification.  
%
%
In this section we show how, even without a truth predicate or similar semantic resources, the implicit rules for vacuous quantification in $\ikto$ are problematic. In particular, we prove that vacuous quantification simulates the role played by exponentials in linear logic. Therefore, vacuous quantification in the setting of Zardini's system allows one to faithfully interpret classical logic as a fragment. 

Since the system $\mathtt{IKT}_{\omega}$ and its fragment are systems in which derivations are infintely branching well-founded trees, we need to suitably modify the notion of height in order to carry out inductive arguments. To deal with infinitary derivations we assign ordinals to measure the heights of the derivations. The assignment is the standard one as can be found in \cite{sch77}, the key point is that for every rule $\rho$:
\begin{align*}
    \ax{\hdots}
    \ax{\Gamma_i}
    \ax{\hdots}
    \Rlb{$\rho$}
    \trinf{\Gamma}
    \disp
\end{align*}
the height of the premise $\Gamma_i$ is strictly less than the height of the conclusion $\Gamma$ for every $i$.
\subsection{Vacuous quantifiers and classical logic}

We start by showing that the rule for the existential quantifier is height-preserving invertible.
\begin{lemma}
The rule $\exists$ is height-preserving invertible.
\end{lemma}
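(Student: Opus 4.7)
The plan is to prove the claim by transfinite induction on the ordinal height $\alpha$ of a given derivation $\mathcal{D}$ of $\vdash \Gamma, \exists x A$, constructing a derivation of $\vdash \Gamma, A(t_1/x), A(t_2/x), \ldots$ of height at most $\alpha$. The invariant to maintain is that at each step the newly constructed premises have heights strictly below $\alpha$, so that reapplying the last rule of $\mathcal{D}$ yields height $\leq \alpha$.

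For the base case, $\mathcal{D}$ is an instance of $(\textsc{in})$, so the end-sequent $\Gamma, \exists x A$ contains a complementary pair $P, \ovl{P}$. Since $\exists x A$ is not a literal, both $P$ and $\ovl{P}$ already lie in $\Gamma$, and the inverted sequent $\vdash \Gamma, A(t_1/x), A(t_2/x), \ldots$ is again an instance of $(\textsc{in})$, of height $1$.

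For the inductive step, let $\rho$ be the last rule of $\mathcal{D}$. If $\rho$ is the $\exists$-rule introducing the displayed occurrence of $\exists x A$, then its unique premise is precisely $\vdash \Gamma, A(t_1/x), A(t_2/x), \ldots$, and we are done since its height is strictly less than $\alpha$. Otherwise $\exists x A$ appears in the side context of one or more premises of $\rho$, and it is not the principal formula. For the unary rules ($\parr$, $\T$, $\ovl{\T}$, and a non-principal $\exists$) we apply the inductive hypothesis to the single premise and then reapply $\rho$. For $\otimes$ the context splits, so $\exists x A$ belongs to exactly one of the two premises; we invert that premise by the IH and tensor the result with the other premise, left untouched. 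For the infinitary $\forall$-rule with premises $\vdash \Gamma_i, B(t_i/y)$ and conclusion $\vdash \biguplus_{i\in I}\Gamma_i, \forall y B$, we select an index $j$ with $\exists x A \in \Gamma_j$, apply the IH to that premise, and reapply $\forall$ with all other premises unchanged; the IH gives the new $j$-th premise a height strictly below $\alpha$, while the others already satisfied that bound.

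The main obstacle is bookkeeping with sequents that are themselves infinite multisets: one must check that replacing one occurrence of $\exists x A$ by the entire infinite list $A(t_1/x), A(t_2/x), \ldots$ in a single premise respects the multiset union forming the conclusion of an infinitary $\forall$-inference, and that ordinal heights behave well across this substitution. The strict inequality between the heights of premises and conclusion, built into the ordinal assignment for this calculus, is exactly what makes the induction go through. If $\exists x A$ occurs with multiplicity $>1$ in $\Gamma$, the statement should be read as inverting one distinguished occurrence; iterating the procedure (transfinitely if necessary) handles the remaining occurrences.
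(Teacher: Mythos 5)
Your proof is correct and follows essentially the same route as the paper's: induction on the (ordinal) height, with the initial-sequent case handled by noting that $\exists x A$ cannot be a literal, the principal case read off from the premise, and all other rules handled by inverting the unique premise containing the displayed occurrence and reapplying the rule — including the infinitary $\forall$ case, which is exactly the example the paper works out. Your extra remarks on multiset bookkeeping and multiplicity only make explicit what the paper leaves implicit.
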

\begin{proof}
By induction on the height of the derivation. If the sequent $\Gamma,\exists x A$ is an initial sequent, then so is $\Gamma, A(t_0/x), A(t_1/x),...$ If the formula $\exists x A$ is principal, the premise gives the desired conclusion. If the last rule applied is any other rule, we apply the induction hypothesis to each of the premise(s) and then the rule again. For example, if the last rule applied is $\forall$, we have:
\begin{align*}
\ax{\hdots}
\ax{\vdash\Gamma_i, B(t_i/y),\exists x A}
\ax{\hdots}
\Rlb{$\forall$}
\TrinaryInfC{$\vdash \Gamma, \forall y B,\exists x A$}
\disp
\end{align*}
We construct the following derivation:
\begin{align*}
\ax{\hdots}
\ax{\vdots \text{\scriptsize $\mathrm{IH}$}}
\noLine
\uinf{\vdash\Gamma_i, B(t_i/y),A(t_1/x),A(t_2/x),...}
\ax{\hdots}
\Rlb{$\forall$}
\TrinaryInfC{$\vdash \Gamma, \forall y B,A(t_1/x),A(t_2/x),...$}
\disp
\end{align*}
where $IH$ denotes the application of the inductive hypothesis.
\end{proof}
We also observe that the weakening rule (Weak) is height-preserving admissible in the system $\mathtt{IZ}_{\omega}$.
\begin{dfn}
The translation from classical logic in a language containing signed propositional atoms, conjunctions and disjunctions (in what follows we assume that the quantifiers are vacuous).
\begin{itemize}
\item $(P)^{*}=P$
\item $(\ovl{P})^{*}= \ovl{P}$
\item $(A\lor B)^{*}=\exists x A^{*}\parr \exists y B^{*}$
\item $(A\land B)^{*}= \exists x A^{*}\otimes \exists y B^{*}$
\end{itemize}
\smallskip
The translation extends to sequents: if $\Gamma$ is a finite multiset of formulae in the classical language, we let  $\Gamma^*=\exists x \Gamma^{*}$, where $\exists x \Gamma^*$ stands for the multiset obtained by prefixing every formula in $\Gamma$ with a vacuous quantifier. We write $A^\infty$ to denote the multiset of formula containing infinitely many copies of $A$. The definition naturally extends to multisets of formulas.
\end{dfn}

\begin{dfn}[$\mathtt{ALV}$]
    $\mathtt{ALV}$ extends $\mathtt{AL}$ with the following rules for vacuous quantification:
    \begin{align*}
        &
        \ax{\vdash\Gamma,A^\infty}
            \Rlb{v$\exists$}
                \uinf{\vdash\Gamma,\exists x A}
                    \disp
            && 
            \ax{\ldots}
            \ax{\vdash\Gamma_i,A}
            \ax{\ldots}\Rlb{v$\forall$}
                \trinf{\vdash\Gamma ,\forall x A}
                \disp
    \end{align*}
\end{dfn}

\smallskip

\begin{prop}
Classical propositional logic is a subsystem of affine propositional logic extended with infinitary rules for vacuous quantification ($\mathtt{ALV}$). 
\end{prop}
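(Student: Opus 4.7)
The plan is to prove, by induction on the height of a classical sequent derivation, that whenever $\vdash \Gamma$ is derivable in a standard sequent calculus for classical propositional logic, its translation $\vdash \Gamma^*$ is derivable in $\mathtt{ALV}$. The conceptual heart of the argument is that a vacuously quantified $\exists x A$ behaves as the linear-logic exponential $?A$: the infinitary premise $A^\infty$ of the rule (v$\exists$) supplies arbitrarily many copies of $A$, which is precisely what is needed to recover both contraction and weakening on $\exists$-prefixed formulas.

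Before starting the induction, I would record three auxiliary facts about $\mathtt{ALV}$. The first is the height-preserving invertibility of (v$\exists$), proved by exactly the same induction as the invertibility lemma for the additive $\exists$ above. The second is that weakening by an arbitrary, possibly infinite, multiset is height-preserving admissible: the standard one-pass modification of the derivation tree, adding the extra formulas to every node, preserves local correctness and the ordinal height. The third, which is the crucial structural fact, is \emph{contraction on vacuous existentials}: from $\vdash \Gamma, \exists x A, \exists x A$ one derives $\vdash \Gamma, \exists x A$ by applying inversion twice to unfold to $\vdash \Gamma, A^\infty, A^\infty$, observing that $A^\infty \uplus A^\infty = A^\infty$ qua multiset, and closing with a single application of (v$\exists$).

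With these tools, the inductive simulation is routine. A classical axiom $\vdash P, \overline{P}$ translates to $\vdash \exists x P, \exists y \overline{P}$, derived from the initial sequent $\vdash P, \overline{P}$ by admissible infinitary weakening to $\vdash P^\infty, \overline{P}^\infty$ followed by two applications of (v$\exists$). A classical disjunction step is simulated by applying $\parr$ to the inductive hypothesis $\vdash \Gamma^*, \exists x A^*, \exists y B^*$, weakening the conclusion to contain infinitely many copies of $\exists x A^* \parr \exists y B^*$, and applying (v$\exists$). The conjunction case is analogous, using $\otimes$ on the two translated premises; if the ambient classical calculus is additive, then after $\otimes$ I would iterate the admissible contraction on each $\exists$-prefixed formula in $\Gamma^*$ to collapse the duplicated context. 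Classical weakening is subsumed by admissible infinitary weakening followed by (v$\exists$), and classical contraction is directly handled by the contraction lemma.

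The main obstacle, as I anticipate it, lies in the careful handling of the infinitary multiset $A^\infty$: one has to verify that $A^\infty \uplus A^\infty = A^\infty$ at the level of multisets, which underpins the contraction step, and that infinitary weakening is genuinely height-preserving admissible in the ordinal-indexed calculus, so that $A^\infty$ can be manufactured from a single occurrence of $A$ whenever (v$\exists$) needs to be applied. Once these bookkeeping points are settled, the translation of each classical rule proceeds mechanically.
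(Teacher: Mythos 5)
Your proposal is correct and follows essentially the same route as the paper: translate with vacuous existential prefixes, establish height-preserving invertibility of (v$\exists$) and admissibility of weakening, derive a contraction principle for $\exists$-prefixed formulas by inverting, absorbing the resulting countable multisets ($A^\infty \uplus A^\infty = A^\infty$), and reapplying (v$\exists$), then simulate each classical rule by induction on the derivation. The only cosmetic difference is that the paper packages the contraction principle in infinitary form (from $\vdash\Gamma,\exists x A^\infty$ infer $\vdash\Gamma,\exists x A$, which it also needs later for the first-order and exponential embeddings), whereas you use the binary instance, which indeed suffices to collapse the duplicated context in the $\otimes$-case of the propositional argument.
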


The proof of the proposition rests on the following Lemma which ensures the admissibility of an infinitary form of contraction for vacuously existentially quantified formulas.
\begin{lemma}\label{lem:exinf}
    The following rule is admissible in $\mathtt{ALV}$:
    \[
        \ax{\vdash\Gamma,\exists x A^\infty}
            \uinf{\vdash\Gamma,\exists x A}
                \DisplayProof
    \]
\end{lemma}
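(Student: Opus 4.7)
The plan is to reduce the claim to a single application of v$\exists$ via a strengthened invertibility lemma. Concretely, I want to prove: if $\vdash \Gamma$ is derivable and $\Gamma$ contains at least one occurrence of $\exists x A$, then $\vdash \Gamma^\star, A^\infty$ is derivable, where $\Gamma^\star$ denotes $\Gamma$ with every copy of $\exists x A$ deleted. Applied to a derivation of $\vdash \Gamma, \exists x A^\infty$, this immediately yields $\vdash \Gamma, A^\infty$, and a single application of v$\exists$ then gives the required $\vdash \Gamma, \exists x A$. The reason this detour is needed is that the invertibility lemma above removes only one occurrence of $\exists x A$ at a time, and naively iterating it never exhausts the infinitely many copies; a suitably uniform statement is required to treat them in one pass.

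The strengthened lemma will be proved by transfinite induction on the ordinal height of the given derivation, closely mirroring the proof of the invertibility lemma. The base case is immediate: in an initial sequent the pair of dual literals cannot be among the targeted $\exists x A$ copies, so the pair survives in $\Gamma^\star$. For inferences whose principal formula is not one of the targeted copies, I apply the induction hypothesis to each premise that contains at least one such copy, leave the other premises untouched, and then reapply the rule. The genuinely new case is when the last rule is v$\exists$ introducing one of the marked copies: its premise has the form $\vdash \Gamma_0, A^\infty$, where $\Gamma_0$ is the remaining context together with the other copies of $\exists x A$ (possibly none, possibly still infinitely many). If no copies remain, the premise is already the desired sequent; otherwise a further application of the induction hypothesis at strictly smaller height gives $\vdash \Gamma^\star, A^\infty, A^\infty$, which equals $\vdash \Gamma^\star, A^\infty$ by the multiset identity $A^\infty \uplus A^\infty = A^\infty$.

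The main obstacle will be the bookkeeping in the context-splitting rules $\otimes$ and v$\forall$, where the copies of $\exists x A$ in the conclusion are partitioned among the premises in an arbitrary way. Since the conclusion carries at least one copy, at least one premise must carry one as well, so the induction hypothesis is applicable to that branch and produces an $A^\infty$ tail there. When the rule is then reapplied, the one or more $A^\infty$ tails collected from the premises collapse to a single $A^\infty$ in the conclusion, again by the idempotency $k\cdot A^\infty = A^\infty$ for $k \geq 1$, yielding precisely $\vdash \Gamma^\star, A^\infty$. One final application of v$\exists$ then closes the proof of admissibility.
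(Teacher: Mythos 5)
Your proposal is correct in substance but organizes the argument differently from the paper. The paper proves the lemma directly by induction on the height of the derivation of $\vdash\Gamma,\exists x A^\infty$: when one of the infinitely many quantified copies is principal it appeals to the previously established invertibility of $\exists$ to turn the \emph{remaining} copies into $A^\infty$, and in the $\otimes$ case it likewise inverts the copies sitting in each premise before reapplying the rule and a single v$\exists$. You instead factor everything through one strengthened lemma --- delete all targeted copies of $\exists x A$ and append a single $A^\infty$ --- proved by a single transfinite induction, and then conclude with one terminal application of v$\exists$. What your route buys is exactly the point you flag: the one-occurrence invertibility lemma cannot literally be iterated through infinitely many occurrences (after any finite number of inversions infinitely many copies remain), so the paper's appeal to ``invertibility'' at the sequent $\vdash\Gamma,A^\infty,\exists x A^\infty$ tacitly presupposes the uniform, simultaneous version that you prove explicitly; your argument is therefore more self-contained and makes that step honest. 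What the paper's route buys is brevity and reuse of an already-proved lemma. One point to tighten: your strengthened statement should concern a \emph{designated} nonempty multiset of occurrences of $\exists x A$ rather than ``every copy'' occurring in the sequent. As written, the inductive step can misfire when $\exists x A$ reappears as an immediate subformula of a principal formula (e.g.\ $(\exists x A)\parr C$, whose premise exposes a fresh occurrence of $\exists x A$ that your hypothesis would also delete, leaving nothing from which to rebuild the principal formula), and likewise $\Gamma$ itself may contain further copies of $\exists x A$ that must survive into the conclusion of the lemma. Both issues are repaired either by tracking occurrences or by restoring the missing copies with the height-preserving admissible weakening rule, so the core of your argument stands.
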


\begin{proof}
We argue by induction on the height of the derivation. If $\vdash\Gamma,\exists x A^\infty$ is an initial sequent, so is $\vdash\Gamma,\exists x A$, because only literals can be principal. If one of the existential quantifiers is principal, we have:
\begin{align*}
\ax{\vdash\Gamma,A^\infty, \exists x A^\infty }
\Rlb{v$\exists$}
\uinf{\vdash \Gamma,\exists x A^\infty  }
\disp
\end{align*}
By applying the invertibility of the rule for the existential quantifier we get a derivation of $\vdash\Gamma,A^\infty$, because the countable union of a countable multiset of formulas is a countable multiset. The desired conclusion follows by an application of the rule $\exists$. 

If the last rule is a unary rule and $\exists x A$ is not principal, we apply the induction hypothesis to the premise and then the rule again. If the last rule applied is R$\otimes$, we have:
\smallskip
\begin{align*}
\ax{\vdash \Gamma, B, \exists x A}
\ax{\vdash \Delta, C, \exists x A}
\Rlb{$\otimes$}
\binf{\vdash \Gamma,\Delta, \exists x A^\infty}
\disp
\end{align*}
In this case we construct the following derivation:
\smallskip
\begin{align*}
\ax{\vdash \Gamma, B, \exists x A}
\Rlb{\scriptsize inv}
\uinf{\vdash \Gamma, B, A^\infty}
\ax{\vdash \Delta, C, \exists x A}
\Rlb{\scriptsize inv}
\uinf{\vdash \Gamma, C, A^\infty}
\Rlb{$\otimes$}
\binf{\vdash \Gamma,\Delta, B\otimes C, A^\infty}
\Rlb{v$\exists$}
\uinf{\vdash \Gamma,\Delta, B\otimes C, \exists x A}
\disp
\end{align*}
\end{proof}

\begin{figure}\begin{center}
    \begin{tabular}{ c  c}
    \hline\hline\\
         $ \vdash\Gamma,P,\ovl{P}$ \;\;(\textsc{cin}) &\\[1em]
         \ax{\vdash\Gamma,A}
            \ax{\vdash\Gamma,B}\Rlb{$\land$}
                \binf{\vdash\Gamma,A\land B}
                    \disp
            &\ax{\vdash\Gamma, A,B}\Rlb{$\vee$}
                \uinf{\vdash\Gamma,A\vee B}
                    \disp \\[1em]
            \hline\hline
    \end{tabular}\label{tait}\caption{$\mathtt{CPL}$}
\end{center}
\end{figure}

\begin{proof}[Proof of Proposition]
    We first prove that, for $\Gamma$ a finite sequent in the classical logical language, 
    \beq\label{cl:trans}
        \text{$\mathtt{CL}$ derives $\Gamma$ only if $\mathtt{ALV}$ derives $\exists x \Gamma^*$}
    \eeq
    where $\mathtt{CL}$ is a Tait-style formulation of classical logic -- cf.~Figure \ref{tait}. \eqref{cl:trans} is obtained by induction on the length of the proof $n$ in $\mathtt{CL}$, where length can be taken to be the number of nodes in the maximal path of the derivation tree. If $n=0$, we have the following derivation of $\vdash \exists x P,\exists x \ovl{P}$ in $\mathtt{AVL}$
    \[
        \ax{\vdash P^\infty,\ovl{P}^\infty}\doubleLine
        \Rlb{v$\exists$}
            \uinf{\vdash \exists x P^\infty, \exists x \ovl{P}^\infty}
                \disp
    \]
    For $n> 0$, we consider the two different cases of ($\land$) and ($\vee$). In the former case, we reason as follows:
    \smallskip
    \[
        \ax{\vdash\exists x\Gamma^*, \exists x A^*}
            \ax{\vdash\exists x\Gamma^*, \exists x B^*}
            \Rlb{$\otimes$}
                \binf{\vdash(\exists x\Gamma^*)^2,\exists x A^*\otimes \exists x B^*}\Rlb{{\scriptsize Lemma \ref{lem:exinf}}}
                    \uinf{\vdash\exists x\Gamma^*,\exists x A^*\otimes \exists x B^*}
                    \Rlb{Weak}
                        \uinf{\vdash\exists x\Gamma^*,(\exists x A^*\otimes \exists x B^*)^\infty}\Rlb{v$\exists$}
                            \uinf{\vdash\exists x\Gamma^*,\exists x(\exists x A^*\otimes \exists x B^*)}
                    \disp
    \]
    
    \smallskip
    
    In the latter, we consider the following proof in $\mathtt{AVL}$:
    \smallskip
    \[
        \ax{\vdash\exists x \Gamma^*,\exists x A^*,\exists x B^*}
           \Rlb{$\parr$}
           \uinf{\vdash\exists x \Gamma^*,\exists x A^* \parr\exists x B^*}
           \Rlb{Weak}
                \uinf{\vdash\exists x \Gamma^*,(\exists x A^* \parr\exists x B^*)^\infty}\Rlb{v$\exists$}
                    \uinf{\vdash\exists x \Gamma^*, \exists x (\exists x A^* \parr \exists x B^*)}
            \disp
    \]
\end{proof}

The translation should be extended to first-order classical logic (this also possibly isolates another cut-free fragment of Zardini's system).

\begin{lemma}
If $\vdash A_{1}^{*\infty},\hdots,A_{n}^{*\infty}$ is derivable in $\mathtt{ALV}$, then $\mathtt{CL}$ derives $\vdash A_{1},\hdots,A_{n} $.
\end{lemma}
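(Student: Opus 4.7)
The plan is to invert the translation $(\cdot)^{*}$. Define a map $\sigma$ from the \emph{tame} fragment of $\mathtt{ALV}$ -- formulas built only from literals, $\parr$, $\otimes$, and vacuous quantifiers -- to the classical language by
\[
\sigma(P) = P, \qquad \sigma(\ovl{P}) = \ovl{P}, \qquad \sigma(A \parr B) = \sigma(A) \vee \sigma(B),
\]
\[
\sigma(A \otimes B) = \sigma(A) \wedge \sigma(B), \qquad \sigma(\exists x A) = \sigma(\forall x A) = \sigma(A).
\]
An easy induction on $A$ gives $\sigma(A^{*}) = A$. For a (possibly infinite) multiset $\Gamma$ of tame formulas whose support is finite, let $\sigma^{\#}(\Gamma)$ denote the finite multiset obtained by applying $\sigma$ pointwise and collapsing duplicates. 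Since $\mathtt{ALV}$ has no cut rule, every $\mathtt{ALV}$ derivation enjoys the subformula property, so in any derivation of $\vdash A_{1}^{*\infty},\dots,A_{n}^{*\infty}$ each intermediate sequent ranges over the finite set of subformulas of $A_{1}^{*},\dots,A_{n}^{*}$ and $\sigma^{\#}$ is everywhere defined.

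The key step is to strengthen the lemma to the following: \emph{for every tame $\Gamma$ with finite support, if $\mathtt{ALV} \vdash \Gamma$ then $\mathtt{CL} \vdash \sigma^{\#}(\Gamma)$}. I would then proceed by induction on the height of the $\mathtt{ALV}$ derivation. The initial sequent $\vdash \Gamma, P, \ovl{P}$ maps to a $\mathtt{CL}$ initial sequent. For $\parr$, the inductive hypothesis applied to the premise gives a $\mathtt{CL}$ derivation of $\vdash \sigma^{\#}(\Gamma), \sigma(A), \sigma(B)$, and one application of $\vee$ concludes. For $\otimes$, the inductive hypothesis is applied to both premises $\vdash \Gamma, A$ and $\vdash \Delta, B$; the admissibility of weakening in $\mathtt{CL}$ (immediate from the initial-sequent schema with arbitrary context) is then used to unify the contexts into $\sigma^{\#}(\Gamma \cup \Delta)$, after which $\wedge$ is applied to obtain $\vdash \sigma^{\#}(\Gamma \cup \Delta), \sigma(A) \wedge \sigma(B)$. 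The v$\exists$ case is trivial: because $\sigma(\exists x A) = \sigma(A)$, the $\sigma^{\#}$-image of the premise $\Gamma, A^{\infty}$ coincides with that of the conclusion $\Gamma, \exists x A$, so the inductive hypothesis directly yields the required $\mathtt{CL}$-derivation. The v$\forall$ case does not actually arise here by the subformula property, but would be handled analogously using weakening on a single premise.

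Applying the strengthened statement to $\Gamma = A_{1}^{*\infty} \cup \dots \cup A_{n}^{*\infty}$ and using $\sigma(A_{i}^{*}) = A_{i}$, the infinitely many copies of each $A_{i}^{*}$ collapse to a single $A_{i}$, yielding $\mathtt{CL} \vdash A_{1},\dots,A_{n}$ as required. The main obstacle I expect is conceptual rather than technical: one must generalise beyond the literal shape of the end-sequent to arbitrary tame multisets so that the induction has room to decompose internal $\parr$, $\otimes$, and $\exists$ occurrences; and one must verify that the finite-support invariant survives the branching $\otimes$ step, which it does because both premises live inside the same finite universe of subformulas of the end-sequent.
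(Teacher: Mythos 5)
Your argument is correct, and it reaches the conclusion by a genuinely different (and in some respects cleaner) route than the paper. Both proofs proceed by induction on the height of the $\mathtt{ALV}$ derivation with a case split on the last rule, but they differ in the shape of the induction hypothesis. The paper keeps the IH tied to sequents of the special form $A_1^{*\infty},\hdots,A_n^{*\infty}$; since the premises of a $\parr$ or $\otimes$ inference contain \emph{single} occurrences of $\exists x B^{*}$ and $\exists x C^{*}$, the paper must first apply height-preserving invertibility of $\exists$ to turn these back into $B^{*\infty}, C^{*\infty}$ before the IH applies, and it must additionally assume (via admissible weakening) that the multiplicative context split in $\otimes$ leaves infinitely many copies of each $A_i^{*}$ in each premise. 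Your strengthened statement---$\mathtt{ALV}\vdash\Gamma$ implies $\mathtt{CL}\vdash\sigma^{\#}(\Gamma)$ for every finite-support multiset $\Gamma$ of tame formulas, with $\sigma$ the quantifier-erasing readback and $\sigma^{\#}$ collapsing duplicates---absorbs both of these complications at once: the v$\exists$ step becomes a literal identity of $\sigma^{\#}$-images, no invertibility lemma is needed, and the context split in $\otimes$ is repaired by weakening and contraction in $\mathtt{CL}$ (where derivability depends only on the support of a sequent), which is exactly the same appeal to classical contraction the paper makes at the end of its $\otimes$ and $\parr$ cases. What the paper's version buys is that it never has to leave the class of sequents actually produced by the forward translation; what yours buys is a more robust invariant that survives arbitrary decompositions. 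Two small points to tidy up: (i) $\mathtt{ALV}$ also contains the ordinary (additive) $\exists$ rule inherited from $\mathtt{AL}$, which can introduce a vacuous $\exists x A$ from a single copy of $A$; this case is as trivial as v$\exists$ under $\sigma^{\#}$ but should be listed, and by the same subformula argument you use for v$\forall$ the rules for $\with$, $\oplus$ and non-vacuous quantifiers never occur; (ii) in the final step, if some $A_i$ coincide, $\sigma^{\#}$ returns one copy of each distinct formula, so a final weakening in $\mathtt{CL}$ is needed to recover the multiset $\vdash A_1,\hdots,A_n$ literally.
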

\begin{proof}
The proof is by induction on the height of the derivation in $\mathtt{ALV}$. If $\vdash A_{1}^{*\infty},\hdots,A_{n}^{*\infty}$ is an initial sequent, then $\vdash A_{1},\hdots,A_{n} $ is an initial sequent in $\mathtt{CL}$. If $\vdash A_{1}^{*\infty},\hdots,A_{n}^{*\infty}$ is the conclusion of a logical rule we distinguish cases according to the last rule applied. If the last rule applied is $\otimes$ we have:
\begin{align*}
\ax{\vdash \exists x B^{*} , (\exists x B^{*}\otimes \exists x C^{*})^{\infty},\hdots,A_{n}^{*\infty}}
\ax{\vdash \exists x C^{*}, (\exists x B^{*}\otimes \exists x C^{*})^{\infty},\hdots,A_{n}^{*\infty}}
\Rlb{$\otimes$}
\binf{\vdash\exists x B^{*}\otimes \exists x C^{*}, (\exists x B^{*}\otimes \exists x C^{*})^{\infty},\hdots,A_{n}^{*\infty}}
\disp
\end{align*}
We proceed as follows:
\begin{align*}
\ax{\vdash\exists x B^{*} , (\exists x B^{*}\otimes \exists x C^{*})^{\infty},\hdots,A_{n}^{*\infty}}
\Rlb{\scriptsize inv}
\uinf{\vdash B^{*\infty} , (\exists x B^{*}\otimes \exists x C^{*})^{\infty},\hdots,A_{n}^{*\infty}}
\Rlb{\scriptsize IH}
\doubleLine\dashedLine
\uinf{\vdash B ,  B \land   C ,\hdots,A_{n} }
\ax{\vdash \exists x C^{*}, (\exists x B^{*}\otimes \exists x C^{*})^{\infty},\hdots,A_{n}^{*\infty}}
\Rlb{\scriptsize inv}
\uinf{\vdash C^{*\infty}, (\exists x B^{*}\otimes \exists x C^{*})^{\infty},\hdots,A_{n}^{*\infty}}
\doubleLine\dashedLine
\Rlb{\scriptsize IH}
\uinf{\vdash C ,  B \land   C ,\hdots,A_{n} }
\Rlb{$\land$}
\binf{\vdash  B\land C ,  B \land   C ,\hdots,A_{n}}
\doubleLine\dashedLine
\Rlb{\scriptsize C}
\uinf{\vdash B \land   C ,\hdots,A_{n}}
\disp
\end{align*}
where $(C)$ denotes an application of height-preserving admissibility of the rule of contraction in the calculus for classical logic. 
If the last rule applied is $\parr$, we have:
\begin{align*}
    \ax{\vdash\exists x B^{*},\exists x C^{*}, (\exists x B\parr\exists x C)^{*\infty},\hdots, A_{n}^{*\infty}}
    \Rlb{$\parr$}
    \uinf{\vdash (\exists x B\parr\exists x C)^{*\infty},\hdots, A_{n}^{*\infty}}
    \disp
\end{align*}
We construct the following derivation:
\begin{align*}
    \ax{\vdash\exists x B^{*},\exists x C^{* }, (\exists x B\parr\exists x C)^{*\infty},\hdots, A_{n}^{*\infty}}
    \Rlb{\scriptsize inv}
    \doubleLine\dashedLine
    \uinf{\vdash   B^{*\infty},  C^{*\infty}, (\exists x B\parr\exists x C)^{*\infty},\hdots, A_{n}^{*\infty}}
    \Rlb{\scriptsize IH}
    \doubleLine\dashedLine
    \uinf{\vdash   B ,  C , B\lor C ,\hdots, A_{n} }
    \Rlb{$\lor$}
    \uinf{\vdash   B \lor C , B\lor C ,\hdots, A_{n} }
    \Rlb{\scriptsize C}
    \doubleLine\dashedLine
    \uinf{\vdash B\lor C ,\hdots, A_{n}}
    \disp
\end{align*}

\end{proof}
We can now prove the faithfulness of the embedding.
\begin{thm}
$\vdash\Gamma$ is derivable in $\mathtt{CL}$ if and only if $\vdash\exists x \Gamma^{*}$ is derivable in $\mathtt{ALV}$.
\end{thm}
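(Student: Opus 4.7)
The plan is to extract the biconditional directly from the two preceding results: the left-to-right direction is precisely the content of the Proposition proved above, while the right-to-left direction will be obtained from the faithfulness Lemma after a short preparatory step that uses the height-preserving invertibility of the $\exists$-rule.

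For the forward direction, suppose $\mathtt{CL}\vdash \Gamma$. The Proposition already establishes that $\mathtt{ALV}\vdash \exists x\Gamma^*$, so nothing further is needed.

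For the converse, suppose $\mathtt{ALV}\vdash \exists x\Gamma^*$, where $\Gamma=\{A_1,\dots,A_n\}$, so that the derived sequent has the shape $\vdash \exists x A_1^*,\dots,\exists x A_n^*$. By the height-preserving invertibility of $\exists$ established earlier, whenever $\vdash\Delta,\exists x A$ is derivable, also $\vdash\Delta,A(t_1/x),A(t_2/x),\dots$ is derivable at no greater height; since here all the quantifiers are vacuous, every instance $A(t_i/x)$ is just $A^*$, and so we obtain $\vdash\Delta,A^{*\infty}$. Applying this inversion once to each of the outer existentials of $\exists x\Gamma^*$ in turn yields a derivation of $\vdash A_1^{*\infty},\dots,A_n^{*\infty}$ in $\mathtt{ALV}$. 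The preceding Lemma then immediately delivers $\mathtt{CL}\vdash A_1,\dots,A_n$, which is exactly $\mathtt{CL}\vdash \Gamma$.

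The conceptual content — already localised in the two auxiliary results — is the systematic treatment of vacuous existentials as carriers of infinitely many copies of their scopes: soundness of the translation leans on weakening together with the infinitary existential contraction of Lemma \ref{lem:exinf}, while faithfulness exploits the invertibility of $\exists$ in the opposite direction. The only point requiring attention in the step above is that each inversion acts on a single outermost vacuous $\exists$ and returns an infinitary multiset that is still a legitimate sequent of $\mathtt{ALV}$; once this is granted, iterating over the finitely many formulas in $\Gamma$ presents no further obstacle, and the biconditional closes without any additional proof-theoretic machinery.
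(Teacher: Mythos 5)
Your proposal matches the paper's own proof: the forward direction is the soundness Proposition, and the converse inverts each outer vacuous existential to obtain $\vdash A_1^{*\infty},\dots,A_n^{*\infty}$ and then applies the faithfulness Lemma. The additional remarks on why the inversion yields $A^{*\infty}$ for vacuous quantifiers are correct and simply make explicit what the paper leaves implicit.
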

\begin{proof}
From left to right we exploit the soundness of the translation. From right to left we apply invertibility of the rule for the existential quantifier and we get a derivation of $\vdash\Gamma^{*\infty}$. We then apply the faithfulness lemma which yields the desired conclusion.
\end{proof}

\subsection{Extension to first-order and infinitary logic}

We now extend to first-order logic the soundness of the embedding. To do so, we need to introduce clauses which translate the universal and the existential quantifiers. We propose the following:
\begin{itemize}
    \item $(\exists x A)^{*}=\exists x \exists y A^{*}$, $y$ does not occur in $A$.
    \item $(\forall x A)^{*}=\forall x \exists y A^{*}$, $y$ does not occur in $A$.
\end{itemize}
We recall the rules for the universal and existential quantifiers in classical logic.
\begin{figure}\begin{center}
    \begin{tabular}{ c  c}
    \hline\hline\\
            \ax{\vdash\Gamma,\exists x A, A(t/x)}\Rlb{$\exists$}
                \uinf{\vdash\Gamma,\exists x A}
                    \disp
            &\ax{\vdash\Gamma, A(y/x)}\Rlb{$\forall$, y!}
                \uinf{\vdash\Gamma,\forall x A}
                    \disp \\[1em]
            \hline\hline
    \end{tabular}\label{tait}\caption{Classical rules for quantifiers}
\end{center}
\end{figure}
\begin{prop}
The embedding extends to first-order classical logic. 
\end{prop}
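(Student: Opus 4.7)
The plan is to extend the propositional soundness argument by induction on the length of the $\mathtt{CL}$-derivation, adding cases for the two new quantifier rules. The target system is taken to be the first-order extension of $\mathtt{ALV}$: namely $\mathtt{AL}$ augmented both with the vacuous-quantifier rules v$\exists$, v$\forall$ and with the genuine infinitary $\forall$- and $\exists$-rules inherited from $\mathtt{IZ}_\omega$, so that the outer non-vacuous $\forall x$ and $\exists x$ in the translation clauses have matching introduction rules. Height-preserving invertibility of the $\exists$-rules, admissibility of weakening, and Lemma \ref{lem:exinf} will be used freely throughout.

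For the classical $\forall$-rule, with premise $\vdash \Gamma, A(y/x)$ and eigenvariable $y \notin \mathrm{FV}(\Gamma)$, the induction hypothesis furnishes $\vdash \exists z \Gamma^*, \exists z A^*(y/x)$. Since $y$ does not occur in $\exists z \Gamma^*$, substituting any term $t_i$ for $y$ preserves derivability, so for every $i$ we obtain $\vdash \exists z \Gamma^*, \exists z A^*(t_i/x)$; inverting the vacuous outer $\exists z$ and then applying v$\exists y'$, for $y'$ the fresh variable supplied by the translation clause, yields $\vdash \exists z \Gamma^*, \exists y' A^*(t_i/x)$ uniformly in $i$. An application of the infinitary $\forall x$-rule then produces $\vdash \biguplus_i (\exists z \Gamma^*)_i, \forall x \exists y' A^*$; iterated use of Lemma \ref{lem:exinf} collapses the repeated copies of each formula of $\exists z \Gamma^*$ back to a single occurrence, and a final weakening followed by v$\exists z$ delivers the desired $\vdash \exists z \Gamma^*, \exists z (\forall x A)^*$.

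The classical $\exists$-rule is expected to be the main obstacle, because its $\mathtt{CL}$-formulation packages a contraction-like reuse of the principal formula $\exists x A$ that the affine target does not support directly. The induction hypothesis delivers $\vdash \exists z \Gamma^*, \exists z \exists x \exists y A^*, \exists z A^*(t/x)$, and the task is to absorb the residual ``witness'' $\exists z A^*(t/x)$. The plan is to strengthen the inductive statement, in the spirit of the faithfulness lemma proved earlier, to the form ``$\vdash \Gamma$ in $\mathtt{CL}$ implies $\vdash \Gamma^{*\infty}$ in the extended $\mathtt{ALV}$''. Under this reformulation the extra copy of $A^*(t/x)$ is simply absorbed into the already-present $\omega$ copies of $A^*(t/x)$ in $\Gamma^{*\infty}$ (since $\omega + \omega = \omega$ in infinitary multisets), the propositional and $\forall$ cases go through as before with contexts understood multiplicity-wise, and the original target $\vdash \exists z \Gamma^*$ is recovered at the end by applying v$\exists z$ formula-by-formula to pass from $\Gamma^{*\infty}$ back to $\exists z \Gamma^*$, exactly as in the passage from the faithfulness lemma to the final theorem in the propositional case. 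The residual work thus reduces to checking that this $\omega$-version of the inductive hypothesis is stable under the $\otimes$-clause for $\land$, where the two contexts are genuinely merged rather than duplicated; here Lemma \ref{lem:exinf}, applied at the level of each vacuously $\exists z$-prefixed formula, again absorbs the duplication.
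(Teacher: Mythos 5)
Your treatment of the classical $\forall$-rule is essentially the paper's own argument: substitute each term $t_i$ for the eigenvariable, apply the infinitary multiplicative $\forall$-rule, contract the duplicated copies of the context with Lemma \ref{lem:exinf}, then weaken and apply v$\exists$ to restore the outer vacuous prefix. That part is fine.

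The $\exists$-case, however, contains a genuine gap, and it is exactly at the point you flag as the main obstacle. Strengthening the inductive statement to ``$\vdash\Gamma$ in $\mathtt{CL}$ implies $\vdash\Gamma^{*\infty}$ in $\mathtt{ALV}$'' does not make the witness disappear: the formula $A^*(t/x)$ is the translation of the \emph{discharged witness} $A(t/x)$, which is not a member of the context $\Gamma$, so there are no ``already-present $\omega$ copies of $A^*(t/x)$ in $\Gamma^{*\infty}$'' for it to be absorbed into --- the only related formula in the target sequent is $(\exists x A)^*=\exists x\exists y A^*$, which is syntactically different. More importantly, your argument never introduces the \emph{non-vacuous} outer $\exists x$ of $(\exists x A)^*$: the multiplicative $\exists$-rule requires one occurrence of $\exists y A^*(t_i/x)$ for \emph{every} term $t_i$ in the enumeration, whereas the classical rule supplies only the single instance $t$, and no amount of duplicating that one instance (via $\omega+\omega=\omega$) produces the other instances. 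The missing idea --- and the one the paper's proof relies on --- is that \emph{weakening supplies the remaining disjuncts}: from $\vdash\Sigma,\exists y A^*(t/x)$ one weakens to $\vdash\Sigma,\exists y A^*(t_1/x),\exists y A^*(t_2/x),\ldots$ and only then applies the multiplicative $\exists$-rule to obtain $\exists x\exists y A^*$; the resulting second occurrence of $\exists y\exists x\exists y A^*$ (one already present from the translated premise, one newly built from the witness) is then contracted into a single occurrence by the vacuous-quantifier contraction of Lemma \ref{lem:exinf} (invert both, take the countable union, re-apply v$\exists$). Without the weakening-to-all-instances step the case cannot be closed, so as written the proposal does not establish the proposition.
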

\begin{proof}
We only need to check the case of the existential quantifier and the universal one. If the last rule applied is $\exists$, we have:
\begin{align*}
    \ax{\vdash\Gamma,\exists x A, A(t/x)}
    \Rlb{$\exists$}
    \uinf{\vdash\Gamma,\exists x A}
    \disp
\end{align*}
By induction on the height of the derivation we get:
\begin{align*}
    \ax{\vdash\exists y\Gamma,\exists y\exists x \exists y A^{*},  \exists y A^{*}(t/x)}
    \Rlb{\scriptsize Weak}
    \uinf{\vdash\exists y\Gamma,\exists y\exists x \exists y A^{*},  (\exists y A^{*}(t/x))^{\infty}}
    \Rlb{v$\exists$}
    \uinf{\vdash\exists y\Gamma,\exists y\exists x \exists y A^{*},  \exists x\exists y A^{*}(t/x) }
     \Rlb{\scriptsize Weak}
     \uinf{\vdash\exists y\Gamma,\exists y\exists x \exists y A^{*},  (\exists x\exists y A^{*}(t/x))^{\infty} }
      \Rlb{v$\exists$}
     \uinf{\vdash\exists y\Gamma,\exists y\exists x \exists y A^{*}}
     \disp
\end{align*}
In the case of the rule $\forall$, we proceed as follows:
\begin{align*}
    \ax{\hdots}
    \ax{\vdash\exists y \Gamma, \exists y A^{*}(t_i/x)}
    \ax{\hdots }
    \Rlb{v$\forall$}
    \trinf{\vdash(\exists y \Gamma)^{\infty}, \forall x\exists y A^{*}}
    \Rlb{Lemma \ref{lem:exinf}}
    \uinf{\vdash\exists y \Gamma , \forall x\exists y A^{*} }
    \Rlb{\scriptsize Weak}
    \uinf{\vdash\exists y \Gamma , (\forall x\exists y A^{*})^{\infty} }
    \Rlb{v$\exists$}
    \uinf{\vdash\exists y \Gamma , \exists y\forall x\exists y A^{*} }
    \disp
\end{align*}

\end{proof}

The embedding can be further extended to encompass infinitary classical logic, that is the extension of classical logic with the rule:
\smallskip
\[
    \ax{\vdash \Gamma,A(t_1/v)\ldots \vdash \Gamma,A(t_n/v)\ldots}\Rlb{$\forall^\infty$-cl}
        \uinf{\vdash \Gamma,\forall v A}
            \disp
\]
with $\Gamma$ a finite multiset. The claim follows immediately from
\begin{lemma}
    The rule \textsc{($\forall^\infty$-cl)} is admissible in $\mathtt{ALV}$ via the translation ${}^*$ of its formulas. 
\end{lemma}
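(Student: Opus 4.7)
The plan is to mirror, at the infinitary level, the construction used for the classical $\forall$-rule in the proof of the preceding first-order proposition. Assume that, for every term $t_i$ in the enumeration of the terms of the language, the translation of the $i$-th premise, namely $\vdash \exists y\, \Gamma^*, \exists y\, A^*(t_i/v)$, is derivable in $\mathtt{ALV}$ (this is the inductive hypothesis delivered by extending the embedding to $\forall^\infty$-cl). The task is to produce a derivation of $\vdash \exists y\, \Gamma^*, \exists y\, \forall v\, \exists y\, A^*$, which is the translation of $\vdash \Gamma, \forall v A$.

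First I would collect the infinitely many translated premises and apply the infinitary $\forall$-style rule already used in the first-order embedding (with the quantified variable occurring in the body), obtaining
\[
\vdash (\exists y\, \Gamma^*)^\infty,\; \forall v\, \exists y\, A^*,
\]
where the context is $(\exists y\, \Gamma^*)^\infty$ because $\exists y\, \Gamma^*$ is the common context of every premise and the rule takes the infinitary multiset union of the contexts. Since every formula in $\exists y\, \Gamma^*$ is of the shape $\exists y\, B^*$, I can then apply Lemma \ref{lem:exinf} once per formula of $\Gamma^*$ (there are only finitely many, since $\Gamma$ is finite) to collapse $(\exists y\, \Gamma^*)^\infty$ back to $\exists y\, \Gamma^*$, yielding $\vdash \exists y\, \Gamma^*, \forall v\, \exists y\, A^*$. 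A subsequent application of height-preserving weakening produces $\vdash \exists y\, \Gamma^*, (\forall v\, \exists y\, A^*)^\infty$, and a final application of the v$\exists$-rule prefixes the principal formula with the outer vacuous existential, giving the desired $\vdash \exists y\, \Gamma^*, \exists y\, \forall v\, \exists y\, A^*$.

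The main subtlety, which also has to be verified once for all in the first-order embedding, is that $(A(t_i/v))^* = A^*(t_i/v)$, so that the translated premises really have the shape required by the infinitary $\forall$-rule; this is a routine induction on $A$, provided bound variables introduced by the translation are renamed to avoid capturing the terms $t_i$. The only real structural concern is that the infinitary rule is being invoked here on a body in which the bound variable genuinely occurs, rather than the strictly vacuous form displayed in the definition of $\mathtt{ALV}$; but this is precisely what the preceding first-order proposition tacitly requires, and the construction in the infinitary-premise case is entirely analogous, with no extra bookkeeping beyond Lemma \ref{lem:exinf} and weakening.
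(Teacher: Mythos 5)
Your derivation — infinitary $\forall$ applied to the translated premises, then Lemma \ref{lem:exinf} to collapse $(\exists y\,\Gamma^*)^\infty$ to $\exists y\,\Gamma^*$, then weakening and v$\exists$ to add the outer vacuous quantifier — is exactly the paper's proof, step for step. Your added remarks on commutation of the translation with substitution and on the non-vacuous use of the infinitary $\forall$-rule are correct side observations that the paper leaves implicit.
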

\begin{proof}
We proceed as follows:
\begin{align*}
    \ax{\vdash\exists x\Gamma, \exists x A(t_1/v)}
    \ax{\hdots}
    \ax{\vdash\exists x\Gamma, \exists x A(t_n/v)...}
    \Rlb{v$\forall$}
    \trinf{\vdash(\exists x\Gamma)^{\infty}, \forall y\exists x A}
    \Rlb{Lemma \ref{lem:exinf}}
    \uinf{\vdash\exists x\Gamma, \forall y\exists x A}
    \Rlb{\scriptsize Weak}
    \uinf{\vdash\exists x\Gamma, (\forall y\exists x A)^{\infty}}
    \Rlb{v$\exists$}
    \uinf{\vdash\exists x\Gamma, \exists z\forall y\exists x A}
    \disp
\end{align*}
\end{proof}
In the case of infinitary classical logic, we can show that the embedding is indeed faithful, in the sense that if the translation of a sequent is provable in $\mathtt{ALV}$, then the sequent is provable in infinitary classical logic.
\begin{thm}
For any sequent $\vdash\Gamma$, if $\vdash\Gamma^{*\infty}$ is provable in $\mathtt{ALV}$, then $\vdash\Gamma$ is provable in infinitary classical logic.
\end{thm}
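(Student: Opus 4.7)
The plan is to mirror the structure of the propositional faithfulness lemma already proved in the excerpt, adding treatment for the two new outer connectives introduced by the first-order clauses $(\exists x A)^{*}=\exists x\exists y A^{*}$ and $(\forall x A)^{*}=\forall x\exists y A^{*}$. I would argue by induction on the height of the $\mathtt{ALV}$ derivation of $\vdash\Gamma^{*\infty}$. The base case is immediate since literals are fixed by $(\cdot)^{*}$. The propositional cases ($\otimes$ and $\parr$) go through verbatim as in the earlier lemma: invertibility of v$\exists$ strips the outer vacuous existential from each subformula-translation and restores a sequent of the shape $\Delta^{*\infty}$ to which IH applies; the classical rule is then applied and the infinite multiplicity is absorbed using height-preserving admissibility of contraction in infinitary $\mathtt{CL}$.

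For the new cases, the key is to match each translated outer quantifier against the corresponding rule of infinitary $\mathtt{CL}$. If the last rule is v$\exists$ acting on $\exists x\exists y A^{*}$, its premise lists instances $(\exists y A^{*})(t_{i}/x)$ for every term $t_{i}$; I would apply invertibility of the vacuous $\exists y$ pointwise, rearrange the resulting countable union of countable multisets to expose, for each $t_{i}$, infinitely many copies of $A^{*}(t_{i}/x)=(A(t_{i}/x))^{*}$, and then apply IH to obtain $\vdash\Delta_{0},A(t_{1}/x),A(t_{2}/x),\dots$ in infinitary $\mathtt{CL}$. Iterated applications of the classical finitary $\exists$-rule together with contraction then deliver $\vdash\Gamma$. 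The $\forall$-case is dual: from the $\omega$ premises $\vdash\Delta_{i},\exists y A^{*}(t_{i}/x)$ of v$\forall$, I invert the vacuous $\exists y$ in each, fit the result into the $\Delta^{*\infty}$ shape, invoke IH to get $\vdash\Delta'_{i},A(t_{i}/x)$ for each $i$, and finish with one application of $(\forall^{\infty}\text{-cl})$ followed by contraction.

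The main obstacle is bookkeeping for the infinitary contexts. Multiplicative rules split the context, so in the $\otimes$-case and especially in the infinitely branching v$\forall$-case, the $\omega$ copies of each translated formula in $\Gamma^{*\infty}$ must be distributed among the premises in such a way that each premise still fits the $(\cdot)^{*\infty}$ pattern required by IH. The standard device, already implicit in the propositional lemma, is that $\omega+\omega=\omega$ and $\omega\cdot\omega=\omega$, so infinite multiplicities can be redistributed freely; I would make this observation explicit before the inductive case analysis, and package it as a small lemma so that both the binary multiplicative case and the $\omega$-ary v$\forall$-case can invoke it uniformly. Once this shape-preservation is secured and the height-preserving invertibility of v$\exists$ is lifted to infinitary derivations, the case analysis is routine.
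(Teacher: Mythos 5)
Your overall strategy --- induction on the height of the $\mathtt{ALV}$ derivation, height-preserving inversion of the inner vacuous $\exists y$, application of the inductive hypothesis, reapplication of the corresponding classical rule, and absorption of the resulting duplicates by admissible contraction --- is exactly the paper's, and your treatment of the $\forall$-case coincides with the only case the paper works out in detail. The context-splitting issue you flag is also real, but the paper's device is height-preserving admissible \emph{weakening} (each premise of a multiplicative or $\omega$-branching inference is padded so that it again contains infinitely many copies of every side formula) rather than a redistribution of multiplicities; a partition of $\omega$ copies need not leave every premise with infinitely many, so weakening is the move you actually want.

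The genuine problem is your $\exists$-case. If the outer (non-vacuous) $\exists x$ of $(\exists x A)^{*}=\exists x\exists y A^{*}$ is introduced by a rule whose premise lists \emph{all} instances $(\exists y A^{*})(t_{1}/x),(\exists y A^{*})(t_{2}/x),\dots$, then after inverting the inner vacuous quantifiers pointwise, the sequent you want to feed to the inductive hypothesis corresponds to the classical sequent $\Delta_{0},A(t_{1}/x),A(t_{2}/x),\dots$, which is \emph{infinite}. The target calculus has finite sequents (the rule $(\forall^{\infty}\text{-cl})$ is stated for $\Gamma$ a finite multiset, and the only existential rule is the finitary one), so the inductive hypothesis does not apply to that sequent, and no well-founded stack of finitary $\exists$-inferences and contractions can begin at an infinite sequent and end at $\vdash\Gamma$. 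This is precisely where the propositional argument fails to transfer: in the $\parr$ and $\otimes$ cases inversion adds only finitely many new classical formulas, whereas here it adds infinitely many. The case is unproblematic only when the outer $\exists x$ is introduced from a \emph{single} instance $\exists y A^{*}(t/x)$ (as in the additive existential rule of $\mathtt{AL}$): then inversion yields $(A^{*}(t/x))^{\infty}$, the classical counterpart $\Gamma,A(t/x)$ is finite, and one finitary $\exists$ followed by contraction finishes. So you must either argue that this is the only way such a formula becomes principal in the relevant fragment of $\mathtt{ALV}$, or strengthen the induction hypothesis to tolerate the extra instances (as the propositional lemma does with the designated blocks $A_{i}^{*\infty}$), or move to a classical target with infinite sequents. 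As written, the step ``iterated applications of the classical finitary $\exists$-rule together with contraction then deliver $\vdash\Gamma$'' does not go through.
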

\begin{proof}
The proof is by induction on the height of the derivation in $\mathtt{ALV}$ distinguishing cases according to the last rule applied.

Suppose the last rule applied is $\forall$ with principal formula $\forall x \exists y A^{*}$, we have:
\smallskip
\begin{align*}
    \ax{\vdash\Gamma^{*\infty}, (\forall x \exists y A^{*})^{\infty},\exists y A^{*}(t_1/x)}
    \ax{\hdots}
    \ax{\vdash\Gamma^{*\infty},(\forall x \exists y A^{*})^{\infty}, \exists y A^{*}(t_n/x)...}
   \Rlb{$\forall^\infty$-cl}
    \trinf{\vdash\Gamma^{*\infty},(\forall x \exists y A^{*})^{\infty},\forall x \exists y A^{*}} 
    \disp
\end{align*}
we safely assume that the premises contain infinitely many copies of each of the formulas. 
We construct the following derivation:
\begin{align*}
    \ax{\vdash\Gamma^{*\infty}, (\forall x \exists y A^{*})^{\infty},\exists y A^{*}(t_1/x)}
    \Rlb{\scriptsize inv}
    \uinf{\vdash\Gamma^{*\infty}, (\forall x \exists y A^{*})^{\infty},(A^{*}(t_1/x))^{\infty}}
    \Rlb{\scriptsize IH}
    \doubleLine\dashedLine
    \uinf{\vdash \Gamma ,  \forall x A,A(t_1/x) }
    \ax{\hdots}
    \ax{\vdash\Gamma^{*\infty},(\forall x \exists y A^{*})^{\infty}, \exists y A^{*}(t_n/x)...}
     \Rlb{\scriptsize inv}
    \uinf{ \vdash\Gamma^{*\infty},(\forall x \exists y A^{*})^{\infty},(A^{*}(t_n/x))^{\infty}...}
    \Rlb{\scriptsize IH}
    \doubleLine\dashedLine
    \uinf{\vdash\Gamma ,\forall x  A,A(t_n/x) ... }
    \Rlb{$\forall^\infty$-cl}
    \trinf{\vdash\Gamma,\forall x A,\forall x A}
    \doubleLine\dashedLine
    \Rlb{\scriptsize C}
    \uinf{\vdash\Gamma,\forall x A}
    \disp
\end{align*}

\end{proof}
\subsection{Vacuous Quantification and Exponentials}
In this section we show that 
affine logic with exponentials, which in turn can be embedded via a faithful translation in $\mathtt{ALV}$.\footnote{It is fairly obvious that $\mathtt{ALV}$ can be faithfully translated in the extension of $\mathtt{AL}$ with infinitary rules for quantifiers.}

First we recall the rules which govern the exponentials in affine logic 
\smallskip
\begin{align*}
    & \ax{\vdash\Gamma,?A,?A}
    \Rlb{?c}
    \uinf{\vdash\Gamma, ?A}
    \disp
    && \ax{\vdash\Gamma, A}
    \Rlb{?}
    \uinf{\vdash\Gamma,?A}
    \disp
    &&& \ax{\vdash ?\Gamma,A}
    \Rlb{!}
    \uinf{\vdash \Delta,?\Gamma,!A}
    \disp
\end{align*}
We call $\mathtt{ALE}$ the resulting system -- Affine Logic with Exponentials.

Consider the translation:
\begin{itemize}
\item $(P)^\circ=P$
\item $(\ovl{P})^\circ= \ovl{P}$
\item $(A\parr B)^\circ=A^\circ\parr B^\circ$
\item $(A\otimes B)^\circ= A^\circ\otimes B^\circ$
\item $(? A)^\circ=\exists x A^\circ$
\item $(!A)^\circ=\forall x A^\circ$
\end{itemize}
where the quantifiers are vacuous. 
\begin{prop}\label{prop:e<->v}
$\vdash\Gamma$ is provable in $\mathtt{ALE}$ if and only if $\vdash\Gamma^\circ$ is provable in $\mathtt{ALV}$.
\end{prop}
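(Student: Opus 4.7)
The plan is to prove each direction by induction on the height of the derivation, mirroring the soundness/faithfulness pattern used earlier in the section for the CL-to-ALV embedding. Soundness simulates every $\mathtt{ALE}$ rule inside $\mathtt{ALV}$ after applying ${}^\circ$; faithfulness projects an $\mathtt{ALV}$-derivation of a ${}^\circ$-image back into $\mathtt{ALE}$, reading vacuous quantifier inferences as exponential inferences.

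Soundness proceeds by induction on the height of the $\mathtt{ALE}$-derivation. Axioms and the multiplicatives $\parr, \otimes$ translate to themselves. For dereliction (?), from $\vdash \Gamma, A$ the IH yields $\vdash \Gamma^\circ, A^\circ$; I apply Weak to obtain $\vdash \Gamma^\circ, (A^\circ)^\infty$ and then v$\exists$ to conclude $\vdash \Gamma^\circ, \exists x A^\circ$. For contraction (?c) applied to $\vdash \Gamma, ?A, ?A$, the IH followed by Lemma \ref{lem:exinf} collapses two copies of $\exists x A^\circ$ into one. The critical case is promotion (!): from $\vdash ?\Gamma_0, A$ the IH yields $\vdash \exists x \Gamma_0^\circ, A^\circ$, which is then used as each of the $\omega$ premises of v$\forall$ to derive $\vdash (\exists x \Gamma_0^\circ)^\infty, \forall x A^\circ$; iterated applications of Lemma \ref{lem:exinf}, one per formula in $\Gamma_0^\circ$, contract the infinitary context down to $\exists x \Gamma_0^\circ$, after which a final Weak adjoins the arbitrary $\Delta^\circ$.

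Faithfulness is shown by transfinite induction on the ordinal height of the $\mathtt{ALV}$-derivation of $\vdash \Gamma^\circ$. Initial sequents and the multiplicative cases invert the translation and invoke the IH. Because every quantifier occurring in $\Gamma^\circ$ is vacuous, any standard $\mathtt{AL}$ quantifier inference whose principal formula lies in $\Gamma^\circ$ can be preprocessed into its vacuous counterpart, so without loss of generality all quantifier rules of the derivation are v$\exists$ or v$\forall$. A v$\exists$ step with principal $\exists x A^\circ = (?A)^\circ$ is then matched on the $\mathtt{ALE}$ side by dereliction (?) followed by contraction (?c); a v$\forall$ step with principal $\forall x A^\circ = (!A)^\circ$ is matched by promotion after regrouping the distributed context via Lemma \ref{lem:exinf}.

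The hardest point is reconciling the $\omega$-multiset $(A^\circ)^\infty$ in the premise of v$\exists$ with the finitary character of $\mathtt{ALE}$. My strategy is to strengthen the inductive hypothesis to cover sequents of the form $\vdash \Gamma^\circ, \Sigma$, where $\Sigma$ is a (possibly infinite) multiset of copies of formulas $A^\circ$ with $?A \in \Gamma$, the $\mathtt{ALE}$ counterpart being simply $\vdash \Gamma$ (absorbing $\Sigma$ into the corresponding $?$-formulas via iterated (?) and (?c)). A parallel difficulty arises for v$\forall$, whose premises distribute the context arbitrarily while the $\mathtt{ALE}$ promotion rule demands a specific $?$-shape; this is handled by first contracting the premises via Lemma \ref{lem:exinf} so that a uniform invocation of the IH becomes possible, and then rebuilding the context with weakening on the $\mathtt{ALE}$ side.
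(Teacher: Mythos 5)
Your overall architecture matches the paper's: soundness by induction on the height of the $\mathtt{ALE}$-derivation, with promotion handled exactly as in the paper's Lemma \ref{lem:bang} (the same sequent $\vdash\exists y\Gamma_0^\circ, A^\circ$ fed to every premise of v$\forall$, followed by Lemma \ref{lem:exinf} to contract $(\exists y\Gamma_0^\circ)^\infty$), and faithfulness via the same strengthened statement as the paper's Lemma \ref{lem:faith}, in which infinite multisets $A^{\circ\infty}$ on the $\mathtt{ALV}$ side are absorbed into single formulas $?A$ on the $\mathtt{ALE}$ side.

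The gap is in the v$\forall$ case of the faithfulness direction, which is precisely the case that carries the content of the lemma. Promotion in $\mathtt{ALE}$ requires the entire side context to be $?$-prefixed, and you propose to meet this requirement by ``regrouping the distributed context via Lemma \ref{lem:exinf}''. That lemma only contracts infinitely many copies of a single vacuously existentially quantified formula occurring in one sequent; it cannot collect a finite context that has been split across distinct premises of a v$\forall$ inference, and it does nothing for context formulas not of the form $\exists x C^\circ$ (literals, $\otimes$- and $\parr$-compounds, $\forall$-formulas), which may well occur in $\Gamma^\circ$. The step you are missing is the paper's finiteness observation: after normalising the derivation by admissible weakening so that every premise of the v$\forall$ carries all of $A_1^{\circ\infty},\dots,A_n^{\circ\infty}$, the remaining \emph{finite} part $\Gamma^{\circ\prime}$ of the context is distributed over infinitely many premises, so at least one premise receives none of it and has the form $\vdash B^\circ, A_1^{\circ\infty},\dots,A_n^{\circ\infty}$. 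Applying the induction hypothesis to that single premise yields $\vdash B, ?A_1,\dots,?A_n$, whose context is entirely $?$-shaped, so promotion applies, and the discarded $\Gamma'$ is restored by the weakening built into the $!$-rule. Without selecting that premise, the IH leaves you with $\vdash \Gamma_i', B, ?A_1,\dots,?A_n$ for a possibly non-$?$-shaped $\Gamma_i'$, and promotion is blocked; no ``uniform invocation of the IH'' over all premises can repair this. A secondary point: your claim that ordinary $\mathtt{AL}$ quantifier inferences on vacuous formulas can be preprocessed into v-rules fails for $\forall$ (the ordinary rule would licence $\vdash \ovl{P},\forall x P$ directly from an axiom, which v$\forall$ does not), so you should instead stipulate, as the paper implicitly does, that the vacuous quantifiers in the image of $\circ$ are governed by the v-rules only.
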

The proof of Proposition \ref{prop:e<->v} follows immediately from the the next lemmata.
\begin{lemma}\label{lem:bang}
The following rule is admissible in $\mathtt{ALV}$ for every finite multiset $\Gamma$:
\begin{align*}
        \ax{\vdash\exists y\Gamma, A}
            \uinf{\vdash\exists y\Gamma,\forall x A}
                \disp
    \end{align*}
\end{lemma}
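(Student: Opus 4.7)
The plan is a two-step construction: first over-apply the v$\forall$ rule to introduce $\forall x A$ at the cost of blowing up each formula of the context into infinitely many copies, and then recover a single copy of each by iterated use of Lemma \ref{lem:exinf}.

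For the first step, let $\mc{D}$ denote the given derivation of $\vdash \exists y\Gamma, A$. I would feed $\mc{D}$ as every premise of a single application of v$\forall$, setting $\Gamma_i := \exists y\Gamma$ for every $i\in\nat$ and reusing $\mc{D}$ as the premise indexed by $i$. Reading the conclusion context of v$\forall$ as $\biguplus_{i\in\nat}\Gamma_i$, in analogy with Zardini's $\forall$ rule, this step produces $\vdash (\exists y\Gamma)^\infty,\forall x A$.

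For the second step, write $\Gamma = B_1,\ldots,B_n$, so that $(\exists y\Gamma)^\infty$ amounts to $(\exists y B_1)^\infty,\ldots,(\exists y B_n)^\infty$. I then apply Lemma \ref{lem:exinf} $n$ times in succession, each time contracting one block $(\exists y B_k)^\infty$ down to a single occurrence of $\exists y B_k$, and treating everything else (including the still-infinite blocks yet to be contracted) as the side context. After $n$ steps the conclusion is $\vdash \exists y B_1,\ldots,\exists y B_n,\forall x A$, which is the desired sequent $\vdash \exists y\Gamma,\forall x A$.

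The only non-routine point---and the main obstacle I expect---is checking that Lemma \ref{lem:exinf} remains available during the iteration, since after the first peel the side context is no longer of the clean form $\vdash\Delta,\exists y B^\infty$ with $\Delta$ finite: it still harbours the untreated infinite blocks $(\exists y B_{k+1})^\infty,\ldots,(\exists y B_n)^\infty$. Inspecting the proof of Lemma \ref{lem:exinf}, however, its induction on derivation height nowhere uses finiteness of the side context; the only delicate case, $\otimes$, relies solely on the height-preserving invertibility of v$\exists$, which likewise goes through for arbitrary (possibly infinite) contexts. So the iteration is legitimate and the lemma follows.
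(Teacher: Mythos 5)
Your proposal is correct and takes essentially the same route as the paper: the paper likewise feeds the given derivation into every premise of a single v$\forall$ application to obtain $\vdash(\exists y\Gamma)^\infty,\forall x A$ and then collapses the infinite blocks via Lemma \ref{lem:exinf}. Your additional check that Lemma \ref{lem:exinf} tolerates the remaining infinite blocks in the side context during the iteration is a point the paper leaves implicit, and you resolve it correctly.
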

\begin{proof}
The admissibility is proved with the following steps.
\begin{align*}
\ax{\hdots}
\ax{\vdash \exists y \Gamma,A}
\ax{\hdots}
\Rlb{v$\forall$}
\trinf{\vdash (\exists y \Gamma)^\infty, \forall x A}
\Rlb{Lm. \ref{lem:exinf}}
\uinf{\vdash\exists y\Gamma,\forall x A}
\disp
\end{align*}
\end{proof}
\begin{lemma}
If $\mathtt{ALE}$ proves $\vdash\Gamma$, then $\mathtt{ALV}$ proves $\vdash\Gamma^\circ$.
\end{lemma}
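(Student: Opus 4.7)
The plan is to proceed by induction on the height of the $\mathtt{ALE}$-derivation, with a case analysis on the last rule applied. The cases for initial sequents, $\parr$, $\otimes$, and weakening are immediate, since the translation $\circ$ commutes with these connectives and preserves literals; in particular, the translation of an initial sequent is still an initial sequent of $\mathtt{ALV}$, and weakening is inherited from $\mathtt{AL}$ (affine logic includes weakening either as a primitive rule or via the generic context in the initial sequents).

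The three exponential rules are the substantive cases. For the dereliction rule $\vdash \Gamma, A \;\Rightarrow\; \vdash \Gamma, ?A$: the bound variable $x$ in $(?A)^\circ = \exists x\, A^\circ$ is fresh, so $A^\circ(t/x) = A^\circ$ for any term $t$, and a single application of the standard finitary $\exists$-rule inherited from $\mathtt{AL}$ turns $\vdash \Gamma^\circ, A^\circ$ into $\vdash \Gamma^\circ, \exists x\, A^\circ$. For contraction $?c$: the induction hypothesis yields $\vdash \Gamma^\circ, \exists x\, A^\circ, \exists x\, A^\circ$; two applications of the height-preserving invertibility of $\exists$ give $\vdash \Gamma^\circ, A^{\circ\infty}, A^{\circ\infty}$, which coincides as a multiset with $\vdash \Gamma^\circ, A^{\circ\infty}$ (a countable union of countable multisets is countable), and a concluding v$\exists$ step recovers $\vdash \Gamma^\circ, \exists x\, A^\circ$.

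The key case is promotion. From the premise $\vdash ?\Gamma, A$ the induction hypothesis supplies $\vdash (?\Gamma)^\circ, A^\circ$, a sequent whose context is by construction a finite multiset of vacuous existentials. Lemma \ref{lem:bang} then applies directly, producing $\vdash (?\Gamma)^\circ, \forall x\, A^\circ$, after which weakening appends $\Delta^\circ$ to yield the translated conclusion. The main obstacle is precisely this step: naively one would want to apply v$\forall$ to a family of premises $\vdash (?\Gamma)^\circ, A^\circ$ indexed by the terms $t_i$, but v$\forall$ unions the contexts, so the context $(?\Gamma)^\circ$ is duplicated infinitely many times, and only the combination with Lemma \ref{lem:exinf} (which absorbs infinitely many copies of a vacuous existential back into a single one) allows the context to collapse to its original shape. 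This is exactly the interplay that Lemma \ref{lem:bang} is formulated to package, and it is the reason the lemma is stated with a fully generic $\exists y\, \Gamma$-prefix rather than for a single formula.
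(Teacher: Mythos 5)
Your proof is correct and follows essentially the same route as the paper's (which is stated in one line): induction on the height of the $\mathtt{ALE}$-derivation, with the $?c$ case discharged by invertibility of the existential rule and absorption of duplicated vacuous existentials (i.e.\ the content of Lemma~\ref{lem:exinf}), and the promotion case discharged by Lemma~\ref{lem:bang} followed by weakening, for exactly the reason you identify (v$\forall$ multiplies the context, and Lemma~\ref{lem:exinf} collapses it back). The one caveat is the dereliction case: $\mathtt{ALV}$ is intended as \emph{propositional} affine logic extended with the infinitary rules v$\exists$ and v$\forall$ (it must be a fragment of $\mathtt{IK}_\omega$, whose only existential rule is the infinitary one), so rather than invoking a finitary $\exists$-rule ``inherited from $\mathtt{AL}$'' you should weaken $\vdash\Gamma^\circ,A^\circ$ to $\vdash\Gamma^\circ,A^{\circ\infty}$ and then apply v$\exists$ --- which is precisely what the paper does.
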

\begin{proof}
We argue by induction on the height of the derivation of $\vdash\Gamma$ in $\mathtt{ALE}$. The only cases to check are the ones involving exponentials. If the last rule applied is $?$c or $!$ we exploit Lemma \ref{lem:exinf} and Lemma \ref{lem:bang}. If the last rule applied is $?$ we use height-preserving admissibility of weakening and the rule $\exists$.

\end{proof}
\begin{lemma}\label{lem:faith}
Let $\Gamma$ be a finite multiset of formulas of $\mathtt{ALE}$ and $A_1,...,A_n$ be formulas of $\mathtt{ALE}$:
\begin{center}
If $\mathtt{ALV}$ derives $\vdash\Gamma^\circ, A^{\circ\infty}_1,...,A^{\circ\infty}_n$, then $\vdash\Gamma, ?A_1,...,?A_n$ is derivable in $\mathtt{ALE}$.
\end{center}
\end{lemma}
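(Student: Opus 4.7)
The plan is to prove the lemma by induction on the height of the $\mathtt{ALV}$-derivation of $\vdash\Gamma^\circ, A_1^{\circ\infty},\ldots, A_n^{\circ\infty}$, distinguishing cases according to the last rule applied. Since the translation $\circ$ fixes literals, commutes with the propositional connectives, and sends $?$ and $!$ to vacuous $\exists$ and $\forall$, the cut-free subformula property ensures that every principal formula encountered lifts uniquely back to an $\mathtt{ALE}$-formula, so the case analysis is well-defined. For the base case, an initial sequent contains complementary literals $P, \ovl{P}$, each of which either lies in $\Gamma$ or equals some $A_i$ (as $\circ$ fixes atoms); starting from the $\mathtt{ALE}$-axiom $\vdash P, \ovl{P}$, the $?$-rule applied to any such literal followed by weakening delivers $\vdash \Gamma, ?A_1,\ldots, ?A_n$.

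For the propositional rules $\parr$ and $\otimes$ and for v$\exists$, the strategy is uniform: identify whether the principal formula of the $\mathtt{ALV}$-inference came from $\Gamma$ or from some $A_i$, apply the induction hypothesis to each premise, and then invoke the corresponding rule of $\mathtt{ALE}$. In the $\otimes$ case, the context splits and each multiset $A_j^{\circ\infty}$ partitions arbitrarily between the two premises, possibly leaving only finitely many occurrences of $A_j^\circ$ on one side; such finite residues are turned back into a single $?A_j$ by repeated applications of the $?$-rule and $?c$-contractions, and any entirely missing $?A_j$ is introduced by weakening. Whenever the principal formula originated from an $A_i$, the reconstructed $\mathtt{ALE}$-sequent will display a non-$?$-prefixed occurrence of $A_i$; one further application of the $?$-rule turns it into $?A_i$, and $?c$ absorbs it into the copy already supplied by the induction hypothesis. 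The v$\exists$ case is analogous: the infinitely many copies of $B^\circ$ in the premise are either folded into a fresh infinite component when $?B \in \Gamma$, or into the existing $A_i^{\circ\infty}$ when $A_i = ?B$.

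The crucial case, and the main obstacle, is v$\forall$, where the many-premise rule of $\mathtt{ALV}$ must be compressed into a single invocation of the $!$-rule of $\mathtt{ALE}$, which furthermore requires its premise to contain only $?$-prefixed formulas besides the principal $B$. Suppose the principal formula is $\forall x B^\circ = (!B)^\circ$, with $\mathtt{ALV}$-premises $\vdash \Gamma_i^*, B^\circ$ indexed by the enumeration of terms, and $\biguplus_i \Gamma_i^*$ equal to the conclusion minus the principal occurrence. Since $\Gamma^\circ$ is finite, only finitely many of the $\Gamma_i^*$ can receive formulas from $\Gamma^\circ \setminus \{(!B)^\circ\}$, so cofinitely many indices $i_0$ satisfy that $\Gamma_{i_0}^*$ is composed entirely of $A_k^\circ$-occurrences. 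This pigeonhole-style observation is the key to the reduction. Pick any such $i_0$, apply the induction hypothesis to its premise---promoting finite multiplicities of $A_k^\circ$ to single $?A_k$'s by means of $?$ and $?c$, and weakening in the absent ones---and one obtains $\vdash B, ?A_1,\ldots,?A_n$ in $\mathtt{ALE}$. The $!$-rule then applies directly and yields $\vdash \Gamma, ?A_1,\ldots,?A_n$, with $\Gamma \setminus \{!B\}$ introduced by the weakening built into $!$; in the case $A_i = !B$, a final $?$-step followed by $?c$ absorbs the extra occurrence of $!B$ into $?A_i$.
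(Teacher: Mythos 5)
Your proof is correct and follows essentially the same route as the paper's: induction on the height of the $\mathtt{ALV}$-derivation, with the key observation in the v$\forall$ case that finiteness of $\Gamma^\circ$ forces some premise to have a context consisting only of $A_k^\circ$-occurrences, so that after the induction hypothesis the $!$-rule of $\mathtt{ALE}$ becomes applicable. The only (harmless) difference is bookkeeping: the paper uses admissible weakening to assume up front that every premise of $\otimes$ and v$\forall$ carries infinitely many copies of each $A_i^\circ$, whereas you absorb finite residues into the finite part of the sequent and restore $?A_i$ afterwards via $?$ and $?c$.
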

\begin{proof}
We argue by induction on the height of the derivation of $\vdash\Gamma^\circ, A^{\circ\infty}_1,...,A^{\circ\infty}_n$ in $\mathtt{ALV}$ distinguishing cases according to the last rule applied. 

Since we are working in a setting with admissible weakening, we can safely assume that in applications of the rule $\otimes$ and $\forall$ for every $i\in\{1,\hdots, n\}$ infinitely many occurrences of $A^{\circ\infty}_i$ are present in each premise. 
If the last rule applied is $\forall$ and the principal formula is in $\Gamma^{\circ}$, we have:
\begin{align*}
    \ax{\hdots}
    \ax{\vdash\Gamma^{\circ'}_i,B^{\circ}  ,A^{\circ\infty}_1,...,A^{\circ\infty}_n}
    \ax{\hdots}
    \Rlb{v$\forall$}
    \trinf{\vdash\Gamma^{\circ'}, \forall x B^{\circ},A^{\circ\infty}_1,...,A^{\circ\infty}_n}
    \disp
\end{align*}
Since by assumption $\Gamma^{\circ'}$ is finite, there must be an $i<\omega$ such that $\Gamma_i=\emptyset$. We consider that premise $\vdash B^{\circ}  ,A^{\circ\infty}_1,...,A^{\circ\infty}_n$ and we construct the following derivation:
\begin{align*}
    \ax{\vdash B^{\circ}  ,A^{\circ\infty}_1,...,A^{\circ\infty}_n}
    \doubleLine\dashedLine
    \Rlb{\scriptsize IH}
    \uinf{\vdash B   ,?A_1,...,?A_n }
    \Rlb{!}
    \uinf{\vdash !B,?A_1,...,?A_n}
    \Rlb{Weak}
    \uinf{\vdash \Gamma',!B,?A_1,...,?A_n}
    \disp
\end{align*}
If $\forall x B$ is a formula among $A^{\circ\infty}_1,...,A^{\circ\infty}_n$ we proceed analogously with an extra application of the rule $?$. 

If the last rule applied is $\exists$ and the principal formula is among the formulas in $A^{\circ\infty}_1,...,A^{\circ\infty}_n$, we have:
\begin{align*}
    \ax{\vdash\Gamma^{\circ'},   B^{\circ\infty},A^{\circ\infty}_1,...,A^{\circ\infty}_n}
    \Rlb{v$\exists$}
    \uinf{\vdash\Gamma^{\circ'}, \exists x B^{\circ},A^{\circ\infty}_1,...,A^{\circ\infty}_n}
    \disp
\end{align*}
We construct the following derivation:
\begin{align*}
    \ax{\vdash\Gamma^{\circ'},   B^{\circ\infty},A^{\circ\infty}_1,...,A^{\circ\infty}_n}
    \Rlb{\scriptsize IH}
    \doubleLine\dashedLine
    \uinf{\vdash\Gamma' ,   ?B ,?A_1,...,?A_n }
    \disp
\end{align*}
The application of the inductive hypothesis suffices.

The remaining cases are easily provable by applications of the inductive hypothesis followed by applications of the rules of the calculus $\mathtt{ALE}$.
\end{proof}
Lemma \ref{lem:faith} gives a formal representation of the intuitive claim about the infinitary nature of exponentials. Indeed, the context-restriction imposed on the rule for the operator $!$ is simulated by the fact that the infinitary multiplicative rule for $\forall$ yields a premise in which the context not under the scope of $?$ is absent.
\begin{remark}
We observe that due to the transitivity of faithful translations we obtain an alternative proof of the embedding of classical logic into $\mathtt{ALV}$ as follows:
\begin{center}
$\mathtt{CL}$ proves $\vdash\Gamma \Leftrightarrow $ $\mathtt{ALE}$ proves $\vdash\Gamma^\bullet\Leftrightarrow$ $\mathtt{ALV}$ proves $\vdash (\Gamma^\bullet)^\circ$ 
\end{center}
where $\bullet$ is the translation of affine logic into classical logic.
\end{remark}
\subsection{Exponential Liar} From the previous results linking vacuous quantification and the exponentials, and the inconsistency Zardini's system established by \cite{fjol21}, we can restore the propositional structure of the derivation of the Liar paradox in full linear and affine logics extended with rules for full disquotation. 
%
{By our assumptions on $\lambda$-terms, we can assume that there is a term $l:=\corn{?\ovl{\T}(l)}$. We abbreviate with $L$ the sentence $?\ovl{\T}(l)$. We are also assuming that $\ovl{L}$ abbreviates $!\T (l)$. Therefore, the rules
\begin{align*}
    &\ax{\vdash\Gamma, ?\ovl{\T}(l)}
    \Rlb{$L$}
    \uinf{\vdash\Gamma, L}
    \disp
    && \ax{\vdash\Gamma, !\T(l)}
    \Rlb{$\ovl{L}$}
    \uinf{ \vdash\Gamma, \ovl{L}}
    \disp
\end{align*}
are obviously admissible -- in fact, the conclusions are just notational variants of the premisses.}
\begin{prop}\label{prop:liar}
Full, propositional linear and affine logics are inconsistent with the rules
\begin{align*}
    &\ax{\vdash\Gamma,A}\Rlb{$\T$}
        \uinf{\vdash\Gamma,\T\corn{A}}
            \DisplayProof
        &&
            \ax{\vdash\Gamma,\ovl{A}}\Rlb{$\ovl{\T}$}
                \uinf{\vdash\Gamma,\ovl{\T}\corn{A}}
                    \DisplayProof
\end{align*}
for $A$ a sentence possibly containing exponentials.
\end{prop}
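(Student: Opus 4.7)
The plan is to exhibit, inside $\mathtt{ALE}$ plus the $\T$/$\ovl{\T}$ rules, a direct propositional Liar derivation that parallels Fjellstad's derivation in $\ikto$ \cite{fjol21}. Via the translation $\circ$ of Proposition \ref{prop:e<->v}, the exponentials $?$ and $!$ play exactly the role that vacuous quantifiers play in Zardini's system, so that $L = {?}\ovl{\T}(l)$ behaves as a vacuously existentially quantified Liar and $({?c})$ supplies the hidden contraction. The admissible rules for $L$ and $\ovl{L}$ stated just before the proposition allow us to identify $L$ with $?\ovl{\T}(l)$ and $\ovl{L}$ with $!\T(l)$ freely throughout.

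First I would derive $\vdash L, L$: start from the atomic initial sequent $\vdash \T(l), \ovl{\T}(l)$; apply $(?)$ to obtain $\vdash \T(l), L$; then apply $(!)$, whose context restriction is met since the remaining context $L = {?}\ovl{\T}(l)$ has the required shape, to reach $\vdash \ovl{L}, L$; next apply $(\ovl{\T})$ with $A := L$ to the $\ovl{L} = \T(l)$ side to obtain $\vdash \ovl{\T}(l), L$, and a final $(?)$ produces $\vdash L, L$. An application of $({?c})$ contracts this to $\vdash L$. Dually, from $\vdash L$ I would apply $(\T)$ with $A := L$ to get $\vdash \T(l)$; since the principal context is now empty (trivially of the form $?\Gamma$), a further application of $(!)$ yields $\vdash \ovl{L}$. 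A single cut on $L$ between $\vdash L$ and $\vdash \ovl{L}$ then produces the empty sequent, witnessing inconsistency.

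The main subtlety is the asymmetry between $L$ and $\ovl{L}$: only $L$ is directly of the form $?A$ and hence admits contraction via $({?c})$. The derivation handles this by first collapsing $\vdash L, L$ to $\vdash L$ on the $?$-side, and then bootstrapping to $\vdash \ovl{L}$ by pushing $\T(l)$ under a bang with empty context. Since the $(!)$ schema is used only with $\Delta = \emptyset$, no genuine weakening is invoked, so the derivation goes through in propositional linear logic as well as in the affine case, covering both halves of the proposition. It is essential that the $\T$ and $\ovl{\T}$ rules apply to sentences possibly containing exponentials, since $L$ itself has the form ${?}\ovl{\T}(l)$; this is precisely the standing hypothesis of the proposition, and the argument shows that this hypothesis alone, together with the $?c$ rule and the $(!)$ rule's context restriction, is already lethal.
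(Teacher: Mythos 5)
Your derivation is exactly the paper's: the same bootstrap $\vdash \T(l),\ovl{\T}(l) \leadsto \vdash L,L \leadsto_{?c} \vdash L$, then $\vdash L \leadsto \vdash \T(l) \leadsto \vdash\, !\T(l) = \ovl{L}$, and a final cut on $L$, with the same observation that $(!)$ is only ever used with empty $\Delta$ so the argument covers linear as well as affine logic. The only blemish is the slip ``$\ovl{L}=\T(l)$'' (it should read $\ovl{L}=\,!\T(l)$), which does not affect the steps themselves.
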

\begin{proof}
\begin{align*}
\ax{}
\Rlb{in}
\uinf{\vdash\ovl{\T}(l),\T(l)}
\Rlb{?}
\uinf{\vdash?\ovl{\T}(l),\T(l)}
\Rlb{!}
\uinf{\vdash?\ovl{\T}(l),!\T(l)} 
\Rlb{$\ovl{L}$}
\uinf{\vdash?\ovl{\T}(l),\ovl{L}}
\Rlb{$\ovl{\T}$}
 \uinf{\vdash ?\ovl{\T}(l),\ovl{\T}(l)}     \Rlb{?}
   \uinf{\vdash ?\ovl{\T}(l),?\ovl{\T}(l)}     
   \Rlb{?c}
   \uinf{\vdash ?\ovl{\T}(l)}   
   \Rlb{$L$}
   \uinf{\vdash L}  
  \ax{}
\Rlb{in}
\uinf{\vdash\ovl{\T}(l),\T(l)}
\Rlb{?}
\uinf{\vdash?\ovl{\T}(l),\T(l)}
\Rlb{!}
\uinf{\vdash?\ovl{\T}(l),!\T(l)} 
\Rlb{$\ovl{L}$}
\uinf{\vdash?\ovl{\T}(l),\ovl{L}}
\Rlb{$\ovl{\T}$}
 \uinf{\vdash ?\ovl{\T}(l),\ovl{\T}(l)}     \Rlb{?}
   \uinf{\vdash ?\ovl{\T}(l),?\ovl{\T}(l)}     
   \Rlb{?c}
   \uinf{\vdash ?\ovl{\T}(l)}   
   \Rlb{$L$}
   \uinf{\vdash L}   
   \Rlb{\T}
   \uinf{\vdash \T(l)}
   \Rlb{!}
   \uinf{\vdash !\T(l)}
   \Rlb{$\ovl{L}$}
    \uinf{\vdash \ovl{L}}
    \Rlb{cut}
   \binf{\vdash }
   \disp
   \end{align*} 
\end{proof}
\begin{remark}
The content of Proposition \ref{prop:liar} shows that - in general - full linear logic with exponentials is enough to simulate the liar paradox when paired with rules for na\"ive truth. We would like to point out that in our setting the faithful embedding of the exponentials in $\mathtt{ALV}$ requires the presence of the structural rule of weakening.
\end{remark}
\section{Cut-elimination for multiplicative quantifiers}

\subsection{Zardini's cut-elimination: another visit}\label{sec:zrce}
The results in the previous sections tell us that Zardini's cut-elimination argument for the theory of na\"ive truth based on his multiplicative quantifiers cannot work. This leaves open the question whether Zardini's procedure could work in the absence of the rules for the truth predicate. The answer is still negative: \cite{fje20} found a gap in Zardini's reduction for the quantifiers.  Fjellstad isolates an example of a sequent which is obviously cut-free derivable, but such that the cut involved in its proof cannot be eliminated following Zardini's instructions. Although pointing to a serious gap in Zardini's reduction, Fiellstad's example involves a case that can nonetheless be dealt with by supplementing Zardini's original reduction strategy with extra conditions.\footnote{To be sure, we believe that Fjellstad's example points to a fundamental flaw in Zardini's strategy, but the specific example does not amount to a knock-down case.} By contrast, we directly show that Zardini's cut-elimination algorithm is based on a proof-manipulation that does not preserve provability. 

The problem involves the elimination of cuts in which the cut formula is principal in both the premises of the cut and is a universal or existential formula. Consider the cut which needs to be eliminated.
\begin{prooftree}
\AxiomC{$\hdots$}
 \AxiomC{$ \vdash\Gamma_{i}, ,A(t_{i}/x) $}
 \AxiomC{$\hdots$}
 \RightLabel{\scriptsize $\forall$}
    \TrinaryInfC{$ \vdash\Gamma, \forall x  A$}
    \AxiomC{$\vdash \overline{A}(t_{1}/x),\overline{A}(t_{2}/x),...,\Delta$}
    \RightLabel{\scriptsize $\exists$}
    \UnaryInfC{$\vdash\exists x  \overline{A},\Delta $}
    \RightLabel{\scriptsize Cut}
    \BinaryInfC{$\vdash\Gamma,\Delta $}
    
    \end{prooftree}
The solution proposed by Zardini is to reduce the size of the multiset of cut formulas $\overline{A}(t_{1}/x),\overline{A}(t_{2}/x),...$ introduced by the application of $\exists$. In particular, one should trace up the multiset in the derivation until it becomes finite in a branch. 
By the design of the system a countably infinite (sub)multiset of $\overline{A}(t_{1}/x),\overline{A}(t_{2}/x),...$ can only be introduced by the rule $\forall$ or by a weakened initial sequent, we detail the first case. 
\begin{prooftree}

\AxiomC{$\hdots$}
 \AxiomC{$ \vdash\Gamma_{i}, ,A(t_{i}/x) $}
 \AxiomC{$\hdots$}
 \RightLabel{\scriptsize $\forall$}
    \TrinaryInfC{$ \vdash\Gamma, \forall x  A$}
    \AxiomC{$\hdots$}
    \AxiomC{$   \vdash\overline{A}(t_{i}/x),\Delta''_{i}$}
    \AxiomC{$\hdots$}
     \RightLabel{\scriptsize $\forall$}
    \TrinaryInfC{$\vdash\overline{A}(t_{i}/x),\overline{A}(t_{i+1}/x), \Delta'  $}
    \noLine
    \UnaryInfC{$\vdots$ $\mathcal{D}$}
    \noLine
    \UnaryInfC{$\vdash \overline{A}(t_{1}/x),\overline{A}(t_{2}/x),..., \Delta$}
    \RightLabel{\scriptsize $\exists$}
    \UnaryInfC{$\vdash\exists x\ovl{A},\Delta $}
    \RightLabel{\scriptsize Cut}
    \BinaryInfC{$\vdash\Gamma,\Delta $}
    \end{prooftree}
    \smallskip
Notice that the principal formula in $\forall$ is not displayed.   According to Zardini, we should pick the premise $  \vdash\overline{A}(t_{i}/x),\Delta''_{i}$ and construct the following derivation.
\begin{prooftree}
    \AxiomC{$  \vdash\overline{A}(t_{i}/x),\Delta''_{i} $}
    \noLine
    \UnaryInfC{$\vdots$ $\mathcal{D}$}
    \noLine
    \UnaryInfC{$\vdash \ovl{A}(t_{i}/x),\ovl{A}(t_{i}/x),...,\ovl{A}(t_{i}/x), \Delta'  $}
    \end{prooftree}

\smallskip

The cut is then replaced by $i$ many cuts and the desired conclusion follows from the application of the weakening rule. Now, the gap in Zardini argument is exactly in the passage displayed above. In fact, while the sequent $  \vdash\overline{A}(t_{i}/x),\Delta''_{i}  $ is indeed provable, the same cannot be said of the sequent $   \vdash \ovl{A}(t_{1}/x),\ovl{A}(t_{2}/x),...,\ovl{A}(t_{i}/x), \Delta' $. {In other words, Zardini's reduction is based on the idea that the derivation $\mc{D}$ could be performed \emph{even if one focused on a single premiss only, instead of infinitely many.}} 
{For instance, according to the reduction, one could start with the derivation}
\begin{center}
\AxiomC{$\hdots$}
\AxiomC{$\vdash \ovl{P}(t_{i}/x) , P(t_{i}/x) $}
\AxiomC{$\hdots$}
     \RightLabel{\scriptsize $\forall$}
    \TrinaryInfC{$\vdash\ovl{P}(t_{i}/x),\ovl{P}(t_{i+1}/x),..., \forall x P $}
    \noLine
    \UnaryInfC{$\vdots$ $\mathcal{D}$}
    \noLine
    \UnaryInfC{$\vdash \ovl{P}(t_{1}/x),\ovl{P}(t_{2}/x),...,\Delta'$}
    \DisplayProof
    \end{center}
    
    \smallskip
{According to the reduction, one could then transform the derivation into:}
\begin{center}
\AxiomC{$\vdash \ovl{P}(t_{i}/x) , P(t_{i}/x) $}
     \RightLabel{\scriptsize $\forall$}
    \UnaryInfC{$\vdash\ovl{P}(t_{i}/x), \forall x P $}
    \noLine
    \UnaryInfC{$\vdots$ $\mathcal{D}$}
    \noLine
    \UnaryInfC{$\vdash\ovl{P}(t_{i}/x),\ovl{P}(t_{2}/x),...,\Delta'$}
    \DisplayProof
    \end{center}
The sequent $\vdash\ovl{P}(t_{i}/x), \forall x P  $, however, is clearly not (cut-free) provable. 

\subsection{Eliminating cuts}
Zardini's reduction is flawed even if one considers the system without the truth predicate. However, as we shall now demonstrate, cut is eliminable in Zardini's infinitary logic (without truth), i.e. the system $\mathtt{IK}_{\omega}$.
\begin{align*}
         &\ax{}\Rlb{{\sc in}}
         \uinf{\vdash\Gamma,P,\ovl{P}}
            \DisplayProof   \\[10pt]
            &\ax{\vdash\Gamma,A,B}\Rlb{$\parr$}
                    \uinf{\vdash\Gamma,A\parr B}
                        \disp
                &&\ax{\vdash\Gamma,A}
                    \ax{\vdash\Delta,B}\Rlb{$\otimes$}
                        \binf{\vdash\Gamma,\Delta,A\otimes B}
                            \disp\\[10pt]
            & \ax{\hdots}
            \ax{\vdash\Gamma_i,A(t_i/x)}
            \ax{\hdots}\Rlb{$\forall$}
                \trinf{\vdash\biguplus_{i\in I}\Gamma_i,\forall x A}
                    \disp
                &&\ax{\vdash\Gamma,A(t_1/x),A(t_2/x),\hdots}\Rlb{$\exists$}
                    \uinf{\vdash\Gamma,\exists x A}
                        \disp
    \end{align*}

\smallskip

Our strategy is based on a double induction, on the length of the derivation and on the grade of the cut formula: for this reason, the proof cannot be lifted to the system with a fully disquotational truth predicate since, as it is well-known, truth collapses the grade of sentences. 

We shall eliminate cuts of the form:
  
    \smallskip
\begin{align*}
    \ax{\vdash\Gamma, \Phi}
    \ax{\{\vdash\Delta_{\varphi},\overline{\varphi}\, |\, \varphi\in\Phi\}}
    \Rlb{Cut}
    \binf{\vdash\Gamma,\Delta}
    \disp
\end{align*}
Intuitively, the (CUT) rule allows to cut infinitely many formulas simultaneously. Hence we have one premise $\vdash\Gamma,\Phi$, where $\Phi$ is the multiset of formulas to cut and (possibly) infinitely many premises $\vdash\Delta_\varphi,\ovl{\varphi}$, one for every formula $\varphi\in\Phi$. Finally, the multiset $\Delta$ in the conclusion denotes the infinitary multiset union of all the multisets $\Delta_\varphi$.

    The depth of a formula $dp(\varphi)$ is the number of logical connectives (including quantifiers) occurring in it. We shall reason by double induction, with main induction hypothesis on the degree of the multiset of cut formulas, i.e. $dg(\Phi)=sup_{\varphi\in\Phi} (dp(\varphi))+1$ (the degree of a multiset will be - in general - an ordinal), and secondary induction hypothesis on the Hessenberg ordinal sum of the height of the derivations (which is commutative, associative, left and right cancellative and strictly monotone in both arguments). The key point of the reduction is the fact that infinite multisets of the form $[A(t_{i}/x)\,|\, i\in I]$ have a {\it finite degree}, because all the formulas occurring inside them have the same degree.
    
We first prove an auxiliary lemma which enables us to remove cuts on atomic formulas.
\begin{lemma}
For any multiset $\Gamma,\Delta$ and any literal $P$, the rule:
  
    \smallskip
\begin{align*}
    \ax{\vdash\Gamma,P}
    \ax{\vdash\Delta,\ovl{P}}
    \Rlb{Cutat}
    \binf{\vdash\Gamma,\Delta}
    \disp
\end{align*}
is admissible.
\end{lemma}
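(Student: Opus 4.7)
The plan is to proceed by transfinite induction on the ordinal height $\alpha$ of the derivation $\mc{D}$ of $\vdash\Gamma,P$, treating the second premise $\vdash\Delta,\ovl{P}$ as a parameter and appealing to height-preserving admissibility of weakening (which transfers to $\mathtt{IK}_\omega$ from the discussion already carried out for $\mathtt{ALV}$). The decisive observation is that, since $P$ is a literal, it can never be principal in any logical rule: the only way $P$ enters the derivation is as a side formula in an instance of ({\sc in}). In particular, no secondary induction on formula complexity is needed at this stage; that ingredient is reserved for the full cut-elimination theorem to follow.

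For the base case, $\vdash\Gamma,P$ is an initial sequent. Either the displayed $P$ is part of the axiomatic complementary pair, in which case $\ovl{P}\in\Gamma$, so $\Gamma=\Gamma',\ovl{P}$ and $\vdash\Gamma,\Delta$ follows from $\vdash\Delta,\ovl{P}$ by weakening; or the complementary pair lies entirely inside $\Gamma$, and $\vdash\Gamma,\Delta$ is itself an instance of ({\sc in}). For the inductive step, we case-split on the last rule. For $\parr$, $\otimes$, and $\exists$, the distinguished occurrence of $P$ sits in the context of a unique premise; we apply the induction hypothesis there against $\vdash\Delta,\ovl{P}$ and then reapply the rule. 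The one subtle case is $\forall$, whose conclusion context is the multiset union $\biguplus_{i\in I}\Gamma_i$: the distinguished $P$ lives in some specific $\Gamma_{i_0}$. We apply the induction hypothesis only to the premise $\vdash\Gamma_{i_0},A(t_{i_0}/x)$ against $\vdash\Delta,\ovl{P}$, obtaining $\vdash(\Gamma_{i_0}\setminus\{P\})\cup\Delta,A(t_{i_0}/x)$, leave the remaining premises $\vdash\Gamma_i,A(t_i/x)$ for $i\neq i_0$ untouched, and reapply $\forall$. The resulting context is $(\biguplus_{i}\Gamma_i\setminus\{P\})\cup\Delta,\forall x A=\Gamma,\Delta$, exactly as required.

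The step I expect to be most delicate is precisely this bookkeeping in the $\forall$-case: it is crucial to perform the atomic cut in a single premise of the infinitary rule rather than in parallel across all of them, otherwise $\Delta$ would be duplicated unboundedly many times when it is reabsorbed into the multiset union in the conclusion. This is, of course, the very failure mode that undermines Zardini's general reduction strategy analysed in Section~\ref{sec:zrce}; here it is harmless because the atomicity of $P$ makes a single descent on one side of the cut suffice, and the recursion terminates after one pass rather than needing to be coordinated with a complexity-decreasing outer induction.
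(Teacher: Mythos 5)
Your proof is correct and follows essentially the same route as the paper's: induction on the height of the derivation of $\vdash\Gamma,P$, with the base case discharged by admissibility of weakening and the inductive step exploiting that a literal is never principal, so the cut permutes upward. Your explicit treatment of the infinitary $\forall$-case --- pushing the atomic cut into the single premise $\vdash\Gamma_{i_0},A(t_{i_0}/x)$ containing the traced occurrence of $P$ so that $\Delta$ is not duplicated --- is a correct and welcome elaboration of what the paper compresses into ``we permute the cut upward.''
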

\begin{proof}
The proof is by induction on the height of $\vdash\Gamma, P$. If $\Gamma, P$ is an initial sequent, the proof follows by admissibility of weakening. If $\vdash\Gamma, P$ is not an initial sequent, then it is the conclusion of a rule and $P$ cannot be the principal formula. In this case, we permute the cut upward and we eliminate it by induction on the height of the derivation.
\end{proof}
\begin{thm}
The cut rule is admissible in $\mathtt{IK}_{\omega}$.
\end{thm}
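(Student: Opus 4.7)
The plan is a double induction: primary induction hypothesis (IH1) on the degree $dg(\Phi)$ of the cut multiset, secondary induction hypothesis (IH2) on the Hessenberg ordinal sum of the heights of the (possibly infinitely many) premises of the topmost $(\mathrm{CUT})$. Working under an outer induction on the number of cut-applications, one focuses on eliminating a single topmost cut, so both of its sides may be assumed cut-free.

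I would first clear out the peripheral cases. If the main premise $\vdash \Gamma,\Phi$ is an initial sequent, the dual literals $P,\ovl{P}$ are partitioned between $\Gamma$ and $\Phi$: either both lie in $\Gamma$ (the conclusion follows by weakening in the unused $\Delta_\varphi$) or at least one lies in $\Phi$, in which case a single application of the atomic cut lemma against the relevant side premise, combined with weakening, suffices. If the last rule of $\vdash \Gamma,\Phi$ has its principal formula in $\Gamma$, I permute $(\mathrm{CUT})$ upward through it: the new cuts have cut multisets that are submultisets of $\Phi$ and a strictly smaller Hessenberg sum on the main-premise side, unlocking IH2 and allowing the rule to be reapplied below. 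A symmetric permutation handles the case where some side premise $\vdash \Delta_\varphi,\ovl{\varphi}$ ends with a rule introducing a formula distinct from $\ovl{\varphi}$.

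The substantive cases are the principal reductions. For the multiplicative case $\varphi_0 = A\otimes B$ vs.\ $\ovl{\varphi_0} = \ovl{A}\parr\ovl{B}$: the main premise is obtained via $(\otimes)$ from $\vdash \Gamma^1,\Phi^1,A$ and $\vdash \Gamma^2,\Phi^2,B$, and the $\varphi_0$-side premise from $\vdash \Delta_{\varphi_0},\ovl{A},\ovl{B}$. Two successive single-formula cuts on $A$ and $B$, each on a cut multiset of degree $dp(A)+1$ or $dp(B)+1$ which is strictly smaller than $dg(\Phi)$ and so accessible by IH1, fuse the three pieces into $\vdash \Gamma,\Delta_{\varphi_0},\Phi\setminus[\varphi_0]$. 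The residual cut on $\Phi\setminus[\varphi_0]$, treated in one block together with any other formulas of maximal depth in $\Phi$, yields a cut multiset of strictly smaller degree and again reduces to IH1. The decisive case is $\varphi_0 = \forall x A$ vs.\ $\ovl{\varphi_0} = \exists x \ovl{A}$: the main premise is derived by $(\forall)$ from $\{\vdash \Gamma_i, A(t_i/x)\}_{i\in I}$, and the side premise by $(\exists)$ from $\vdash \Delta_{\varphi_0}, \ovl{A}(t_1/x), \ovl{A}(t_2/x),\ldots$. I form a single new $(\mathrm{CUT})$ whose main premise is $\vdash \Delta_{\varphi_0}, \ovl{A}(t_1/x), \ldots$, whose side premises are exactly the $\vdash \Gamma_i, A(t_i/x)$, and whose cut multiset is $[A(t_i/x) : i\in I]$. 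Although this multiset is infinite, all of its members share the same depth $dp(A)$, so its degree is $dp(A)+1 < dp(\forall x A)+1 \leq dg(\Phi)$ and IH1 applies. This is precisely the step that is impossible without a genuinely infinitary $(\mathrm{CUT})$ rule, and equally precisely the step that Zardini's algorithm mishandles, as dissected in \S\ref{sec:zrce}.

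I expect the principal technical obstacle to lie in the bookkeeping around the Hessenberg sum over the (possibly infinite) family of premises: one must justify that the sum is well-defined as the supremum of its finite sub-sums, and that each upward permutation strictly decreases it because exactly one premise shrinks in height while all the others remain untouched. A secondary subtlety is that when $\Phi$ contains several formulas at its maximal depth --- an unavoidable configuration once quantifier reductions have introduced infinite families of same-depth instances --- those formulas must be handled jointly in a single uniform ``top-layer'' reduction, so that the connective-specific principal reductions combine into one new $(\mathrm{CUT})$ of strictly smaller degree rather than degenerating into an attempt to peel the maximal layer off one formula at a time.
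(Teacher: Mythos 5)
Your overall skeleton matches the paper's: a double induction with primary parameter the degree $dg(\Phi)=\sup_{\varphi\in\Phi}dp(\varphi)+1$ of the cut multiset and a secondary height parameter, an auxiliary admissible atomic cut, permutation in the non-principal cases, and --- crucially --- the observation that the multiset $[A(t_i/x)\mid i\in I]$ produced by the $\forall/\exists$ reduction has \emph{finite} degree $dp(A)+1<dg(\Phi)$ because all its members share the same depth. That is exactly the paper's key idea, and your diagnosis of why this step needs a genuinely infinitary cut rule is also the paper's.

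There is, however, a genuine gap in how you dispose of the residual cut formulas $\Phi\setminus[\varphi_0]$. You propose to perform the degree-decreasing cuts on the immediate subformulas of $\varphi_0$ first, arriving at a sequent that still carries $\Phi\setminus[\varphi_0]$, and then to discharge the residue by a final cut whose multiset ``has strictly smaller degree''. This is false in general: $dg$ is a supremum over a possibly infinite multiset, so $dg(\Phi\setminus[\varphi_0])=dg(\Phi)$ whenever $\Phi$ contains another formula of maximal depth (already for $\Phi=[A\otimes B,\,A\otimes B]$), and your suggested repair of treating the whole ``top layer'' in one block is not an executable reduction, since only the single formula $\varphi_0$ is principal in the last inference of the main premise. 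Nor does the secondary hypothesis rescue the residual cut: its main premise is a freshly assembled derivation whose height is not controlled. The paper's proof orders the two steps the other way around. In the $\forall$-principal case it \emph{first} cuts each premise $\vdash\Gamma_i,\Phi_i,A(t_i/x)$ against the side premises for $\Phi_i$ --- legitimate by the \emph{secondary} hypothesis, because the left premise's height has strictly decreased while the degree of $\Phi_i\subseteq\Phi$ has not increased --- obtaining $\vdash\Gamma_i,\Theta_i,A(t_i/x)$; only \emph{then} does it perform the single infinitary cut of $\vdash\Delta,\overline{A}(t_1/x),\overline{A}(t_2/x),\ldots$ (from inverting $\exists$ on the side premise) against these, which is licensed by the \emph{primary} hypothesis. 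Adopting this ordering repairs your argument; as written, it stalls at the residue. (A smaller remark: in the actual proof the paper takes the secondary measure to be simply the height of the left premise of the cut, which sidesteps the well-definedness worries you raise about Hessenberg sums over infinitely many premises.)
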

\begin{proof}
By double (transfinite) induction with main induction hypothesis on the degree of the multiset of cut formulas and secondary induction hypothesis on the height of the left premise of the cut, i.e. $\Gamma,\Phi$.

If $\vdash\Gamma,\Phi$ is an initial sequent, we distinguish cases. If no formula is active in $\Phi$, then $\vdash\Gamma,\Delta$ is an initial sequent too. If one formula is active in $\Phi$, then the proof follows by weakening. If both the atomic formulas are active in $\Phi$, i.e. if $\Phi\equiv \Phi',P,\ovl{P}$, then we have two premises $\vdash\Delta_{P}, P$ and $\vdash\Delta_{\ovl{P}}, \ovl{P}$ and the desired conclusion follows by an application of the admissible rule \textsc{Cutat}. 

If no formula in $\Phi$ is principal, the cut is permuted upwards (possibly replaced by infinitely many cuts) and removed by secondary induction hypothesis. 

If a formula is principal in $\Phi$, we distinguish cases according to its shape. We focus on the cases of the quantifiers, as they are the relevant ones. If a formula of the shape $\forall x A$ is principal, we have:
\begin{align*}
    \ax{\vdash\Gamma_1,\Phi_1,A(t_1/x)}
    \ax{\hdots}
    \ax{\vdash\Gamma_n,\Phi_n,A(t_n/x)\hdots}
    \Rlb{$\forall$}
    \trinf{\vdash\Gamma,\Phi,\forall x A}
    \disp
\end{align*}
The other premises of the cut will be $\Delta,\exists x \overline{A}$ and $\Theta_{\varphi},\overline{\varphi}$ for every $\varphi$ in $\Phi$. First, for every $i\in I$, we perform the following reduction:
\begin{align*}
    \ax{\vdash\Gamma_i,\Phi_i,A(t_i/x)}
    \ax{\{\vdash\Theta_\varphi, \overline{\varphi}\, |\, \varphi\in\Phi_{i}  \}}
    \Rlb{Cut}
    \binf{\vdash\Gamma_i,\Theta_i, A(t_i/x)}
    \disp
\end{align*}
The cut is removed by secondary induction hypothesis on the height of the left premise of the cut. We then apply height-preserving invertibility of the rule $\exists$ to $\vdash\Delta,\exists x \overline{A}$ to get $\vdash\Delta, \overline{A}(t_1/x),\overline{A}(t_2/x),\hdots$. Finally we proceed with the following cut:
\begin{align*}
    \ax{\vdash\Delta, \overline{A}(t_1/x),\overline{A}(t_2/x),\hdots}
    \ax{\{\vdash\Gamma_i,\Theta_i, A(t_i/x)\, |\, i\in I\}}
     \Rlb{Cut}
    \binf{\vdash\Gamma,\Theta, \Delta}
    \disp
\end{align*}
This cut is removed by primary induction hypothesis on the degree of the multiset of cut formulas which is strictly decreased.

If the principal formula is an existential one, we have
\begin{align*}
    \ax{\vdash\Gamma,\Phi,A(t_1/x),A(t_2/x),\hdots}
     \Rlb{$\exists$}
    \uinf{\vdash\Gamma,\Phi,\exists x A}
    \disp
\end{align*}
In this case we look at the premise of the cut of the shape $\vdash\Delta,\forall x\overline{A}$ and we distinguish two subcases. Either $\forall x\overline{A}$ is principal in an inference rule in the derivation or not. In the latter case, then $\Delta$ is already derivable and we obtain the desired conclusion via weakening. In the former case we go upwards to the point in which $\forall x\overline{A}$ is principal (by the design of the rules $\forall x\overline{A}$ will be only in one branch). We have:
\begin{align*}
    \ax{\vdash\Delta'_1, \overline{A}(t_1/x)}
    \ax{\hdots}
    \ax{\vdash\Delta'_n,\overline{A}(t_n/x)}
     \Rlb{$\forall$}
    \trinf{\vdash\Delta',\forall x \overline{A}}
    \noLine
    \uinf{\vdots\pi}
    \noLine
    \uinf{\vdash\Delta,\forall x \overline{A}}
    \disp
\end{align*}
We perform the following reduction:
  
    \smallskip
\begin{align*}
    \ax{\vdash\Gamma,\Phi,A(t_1/x),A(t_2/x),\hdots}
    \ax{\{\vdash\Theta_{\varphi},\overline{\varphi}\,|\,\varphi\in\Phi\}}
    \Rlb{Cut}
    \binf{\vdash\Theta,\Gamma,A(t_1/x),A(t_2/x),\hdots}
     \ax{\{\vdash\Delta'_i, \overline{A}(t_i/x)\, |\,i\in I\}}
     \Rlb{Cut}
    \binf{\vdash\Theta,\Gamma,\Delta'}
    \noLine
    \uinf{\vdots\pi}
    \noLine
    \uinf{\vdash\Theta,\Gamma,\Delta}
    \disp
\end{align*}
The topmost cut is removed by secondary induction hypothesis on the height of the left premise of the cut, whereas the lowermost is removed by induction on the degree of the multiset of cut formulas which has - again - strictly decreased.
\end{proof}
We have introduced an approach to cut-elimination for multiplicative quantifiers. It seems hard to generalize it so as to encompass a theory of truth (we use a double induction and one of the measures is a kind of degree of formulas). However, this is coherent, as the original system by Zardini is inconsistent. We believe that -- as pointed out also in \cite{petersen2022} -- the explicit presence of a double inductive parameter in the cut-elimination procedure brings to the fore the hidden presence of contraction. 
\section{Concluding remarks and future work}
We investigated contraction-free systems and their applicability to the solution of paradoxes in the context of theories of truth. Furthermore, we proposed a new conceptualization of exponentials, thus giving an alternative interpretation of an intrinsically modal notion. Finally, the study led us to a proof-theoretical analysis of multiplicative quantifiers by means of a new cut-elimination procedure for infinitary sequents.

In conclusion, we would like to sketch some open problems which are worth addressing. To start with, it would be interesting to find a suitable truth predicate to add to the base theory while preserving consistency. The task is not trivial, because, as shown, systems based on multiplicative quantifiers are not entirely contraction-free.

Moreover, Grishin set theory is inconsistent modulo the addition of extensionality. A natural question arises as to whether there exists a natural corresponding property in the case of truth theories based on contraction-free systems with additive (or classical, one may say) quantifiers. 

Finally, it would be important to explore whether the cut-elimination theorem can be generalized to the case of infinitary logic with infinite sequents. In particular, it would be interesting to study the strength of the resulting system.
\bibliography{mult_bib}
\bibliographystyle{alpha}
\end{document}